\documentclass[10pt]{article}

\usepackage[svgnames]{xcolor}

\usepackage{latexsym,amssymb,geometry,datetime,aliascnt,bm,graphicx,algorithmic,algorithm,amsthm,amssymb,amsmath,amsfonts,rotate,geometry,amsthm,amssymb,amsmath,amsfonts,rotate,lmodern,amsmath,ifthen,eurosym,graphicx,algorithmic,algorithm,aliascnt,stmaryrd,subcaption,wrapfig,cite,latexsym,amssymb,url,amsmath,latexsym,amsthm,amsfonts,amssymb,ifthen,color,wrapfig,rotate,lmodern,aliascnt,datetime,graphicx,algorithmic,lmodern,algorithm,enumerate,enumitem,todonotes,bm,subcaption,tabularx,colortbl,xspace,amsmath,ifthen,eurosym,wrapfig,cite,subcaption,url,fullpage}

\usepackage{tikz}
\usetikzlibrary{arrows.meta,patterns}
\usetikzlibrary{calc,3d}
\usetikzlibrary{intersections,decorations.pathmorphing,shapes,decorations.pathreplacing,fit}

\usepackage[pdftex,
backref=true,
plainpages = true,
pdfpagelabels,
hyperfootnotes=true,
pdfpagemode=FullScreen,
bookmarks=true,
bookmarksopen = true,
bookmarksnumbered = true,
breaklinks = true,
hyperfigures,
linktocpage,
pagebackref,
urlcolor = magenta,
urlcolor = MidnightBlack,
anchorcolor = green,
hyperindex = true,
colorlinks = true,
linkcolor = black!30!blue,
citecolor = black!30!green]{hyperref}

\RequirePackage[T1]{fontenc}
\RequirePackage[utf8]{inputenc}
\usepackage[english]{babel}
\bibliographystyle{plainurl}

\tolerance2000
\RequirePackage{stmaryrd}
\usepackage{textcomp}
\DeclareUnicodeCharacter{2192}{\ifmmode\to\else\textrightarrow\fi}
\DeclareUnicodeCharacter{2203}{\ensuremath\exists}
\DeclareUnicodeCharacter{183}{\cdot}
\DeclareUnicodeCharacter{2200}{\forall}
\DeclareUnicodeCharacter{2264}{\leq}
\DeclareUnicodeCharacter{2265}{\geq}
\DeclareUnicodeCharacter{8614}{\mathbin{\mapsto}}
\DeclareUnicodeCharacter{8656}{\Leftarrow}
\DeclareUnicodeCharacter{8657}{\Uparrow}
\DeclareUnicodeCharacter{8658}{\Rightarrow}
\DeclareUnicodeCharacter{8659}{\Downarrow}
\DeclareUnicodeCharacter{8669}{\rightsquigarrow}
\newcommand{\eqdef}{\stackrel{{\scriptsize\rm def}}{=}}
\DeclareUnicodeCharacter{8797}{\eqdef}
\DeclareUnicodeCharacter{8870}{\vdash}
\DeclareUnicodeCharacter{8873}{\Vdash}
\DeclareUnicodeCharacter{22A7}{\models}
\DeclareUnicodeCharacter{9121}{\lceil}
\DeclareUnicodeCharacter{9123}{\lfloor}
\DeclareUnicodeCharacter{9124}{\rceil}
\DeclareUnicodeCharacter{222A}{\cup}
\DeclareUnicodeCharacter{2208}{\in}
\DeclareUnicodeCharacter{9126}{\rfloor}
\DeclareUnicodeCharacter{2286}{\subseteq}
\DeclareUnicodeCharacter{9655}{\triangleright}
\DeclareUnicodeCharacter{9665}{\triangleleft}
\DeclareUnicodeCharacter{9671}{\diamond}
\DeclareUnicodeCharacter{9675}{\circ}
\DeclareUnicodeCharacter{10178}{\bot}
\DeclareUnicodeCharacter{10214}{} 
\DeclareUnicodeCharacter{10215}{} 
\DeclareUnicodeCharacter{10229}{\longleftarrow}
\DeclareUnicodeCharacter{10230}{\longrightarrow}
\DeclareUnicodeCharacter{10231}{\longleftrightarrow}
\DeclareUnicodeCharacter{10232}{\Longleftarrow}
\DeclareUnicodeCharacter{10233}{\Longrightarrow}
\DeclareUnicodeCharacter{10234}{\Longleftrightarrow}
\DeclareUnicodeCharacter{10236}{\longmapsto}
\DeclareUnicodeCharacter{10238}{\Longmapsto} 
\DeclareUnicodeCharacter{10503}{\Mapsto}    
\DeclareUnicodeCharacter{10971}{\mathrel{\not\hspace{-0.2em}\cap}}
\DeclareUnicodeCharacter{65294}{\ldotp}
\DeclareUnicodeCharacter{65372}{\mid}

\tikzstyle{ipe stylesheet} = [
  ipe import,
  even odd rule,
  line join=round,
  line cap=butt,
  ipe pen normal/.style={line width=0.4},
  ipe pen heavier/.style={line width=0.8},
  ipe pen fat/.style={line width=1.2},
  ipe pen ultrafat/.style={line width=2},
  ipe pen normal,
  ipe mark normal/.style={ipe mark scale=3},
  ipe mark large/.style={ipe mark scale=5},
  ipe mark small/.style={ipe mark scale=2},
  ipe mark tiny/.style={ipe mark scale=1.1},
  ipe mark normal,
  /pgf/arrow keys/.cd,
  ipe arrow normal/.style={scale=7},
  ipe arrow large/.style={scale=10},
  ipe arrow small/.style={scale=5},
  ipe arrow tiny/.style={scale=3},
  ipe arrow normal,
  /tikz/.cd,
  ipe arrows, 
  <->/.tip = ipe normal,
  ipe dash normal/.style={dash pattern=},
  ipe dash dotted/.style={dash pattern=on 1bp off 3bp},
  ipe dash dashed/.style={dash pattern=on 4bp off 4bp},
  ipe dash dash dotted/.style={dash pattern=on 4bp off 2bp on 1bp off 2bp},
  ipe dash dash dot dotted/.style={dash pattern=on 4bp off 2bp on 1bp off 2bp on 1bp off 2bp},
  ipe dash normal,
  ipe node/.append style={font=\normalsize},
  ipe stretch normal/.style={ipe node stretch=1},
  ipe stretch normal,
  ipe opacity 10/.style={opacity=0.1},
  ipe opacity 30/.style={opacity=0.3},
  ipe opacity 50/.style={opacity=0.5},
  ipe opacity 75/.style={opacity=0.75},
  ipe opacity opaque/.style={opacity=1},
  ipe opacity opaque,
]

\everymath{\color{MidnightBlue}}

\usepackage{color}
\definecolor{MidnightBlack}{rgb}{0.1,0.1,0.36}
\definecolor{MidnightBlue}{rgb}{0.1,0.1,0.41}
\definecolor{Black}{rgb}{0,0, 0}
\definecolor{Blue}{rgb}{0, 0 ,1}
\definecolor{Red}{rgb}{1, 0 ,0}
\definecolor{White}{rgb}{1, 1, 1}
\definecolor{Grey}{rgb}{.6, .6, .6}
\definecolor{Mygreen}{rgb}{.0, .6, .0}
\definecolor{Yellow}{rgb}{.55,.55,0}
\definecolor{Mustard}{rgb}{1.0, 0.86, 0.35}
\definecolor{applegreen}{rgb}{0.55, 0.71, 0.0}
\definecolor{darkturquoise}{rgb}{0.0, 0.81, 0.82}
\definecolor{celestialblue}{rgb}{0.29, 0.59, 0.82}
\definecolor{green_yellow}{rgb}{0.68, 1.0, 0.18}
\definecolor{crimsonglory}{rgb}{0.75, 0.0, 0.2}
\definecolor{darkmagenta}{rgb}{0.30, 0.0, 0.30}
\definecolor{internationalorange}{rgb}{1.0, 0.31, 0.0}
\definecolor{darkorange}{rgb}{1.0, 0.55, 0.0}

\newcommand{\red}[1]{{\color{Red}#1}}

\newcommand{\green}[1]{{\color{Mygreen}#1}}

\newcommand{\tw}{{\sf tw}}

\newcommand{\remove}[1]{}




\newcounter{func}

\newcommand{\funref}[1]{\hyperref[#1]{f_{\ref*{#1}}}} 
\newcounter{con}

\newcommand{\conref}[1]{\hyperref[#1]{c_{\ref*{#1}}}} 


\newcommand{\mynewtheorem}[2]{
	\newaliascnt{#1}{dummy}
	\newtheorem{#1}[#1]{#2}
	\aliascntresetthe{#1}
}

\theoremstyle{plain}
\mynewtheorem{theorem}{Theorem}
\mynewtheorem{proposition}{Proposition}
\mynewtheorem{corollary}{Corollary}
\mynewtheorem{lemma}{Lemma}
\theoremstyle{definition}
\mynewtheorem{definition}{Definition}
\theoremstyle{remark}
\mynewtheorem{sublemma}{Sublemma}
\mynewtheorem{claim}{Claim}
\mynewtheorem{remark}{Remark}
\mynewtheorem{fact}{Fact}
\mynewtheorem{conjecture}{Conjecture}
\mynewtheorem{question}{Question}
\mynewtheorem{observation}{Observation}
\mynewtheorem{problem}{Problem}
\mynewtheorem{note}{Note}


\usepackage{xspace}

\newcommand{\sqgm}{\mbox{\rm \ttfamily\fontseries{l}\selectfont SQG\fontseries{b}\selectfont M}}
\newcommand{\sqgc}{\mbox{\rm \ttfamily\fontseries{l}\selectfont SQG\fontseries{b}\selectfont C}}
\newcommand{\bcg}{{\bf bcg}}
\newcommand{\sm}{\setminus}     
\newcommand{\intv}[2]{\left \llbracket #1, #2 \right \rrbracket}
\newcommand{\es}{\emptyset}

\newcommand*{\myproofname}{My proof}
\newenvironment{myproof}[1][\myproofname]{\begin{proof}[#1]}{\end{proof}}

\newcommand{\Acal}{\mathcal{A}}
\newcommand{\Ocal}{\mathcal{O}}
\newcommand{\Gcal}{\mathcal{G}}
\newcommand{\Scal}{\mathcal{S}}
\newcommand{\Xcal}{\mathcal{X}}

\begin{document}



\title{Contraction Bidimensionality of Geometric \\ Intersection Graphs\thanks{Corresponding author: Dimitrios M. Thilikos, Email: \texttt{sedthilk@thilikos.info}\,.}~$^{,}$\thanks{An extended abstract of this article appeared in the \emph{Proceedings of the 12th International Symposium on Parameterized and Exact Computation, IPEC 2017, September 6-8, 2017, Vienna, Austria} \cite{BasteT17contr}. The first author was supported by ANR projects DEMOGRAPH (ANR-16-CE40-0028). The second author was supported  by   the ANR projects DEMOGRAPH (ANR-16-CE40-0028), ESIGMA (ANR-17-CE23-0010), and the French-German Collaboration ANR/DFG Project UTMA (ANR-20-CE92-0027).}}

\author{Julien Baste\thanks{Univ Lille, Centrale
	Lille, CRIStAL, Lille, France,  Email: \texttt{julien.baste@univ-lille.fr}\,.}
\and  Dimitrios M. Thilikos$^{*,}$\thanks{LIRMM, Univ Montpellier, CNRS, Montpellier, France.}}
\date{\empty}

\maketitle
%
%
%
%

\begin{abstract}
\noindent Given a graph $G$, we define ${\bf bcg}(G)$ as the minimum $k$ for which $G$ can be contracted to the uniformly triangulated grid $\Gamma_{k}$. A graph class ${\cal G}$ has the SQG{\bf C} property 
if  every graph $G\in{\cal G}$ has treewidth $\Ocal({\bf bcg}(G)^{c})$ for some $1\leq c<2$.
The  SQG{\bf C} property is important for algorithm design as it defines the 
applicability horizon of a series of meta-algorithmic results, in the framework of bidimensionality theory, 
related to fast parameterized algorithms, kernelization, and approximation schemes. These results apply to 
a wide family of problems,  namely problems that are  {\sl contraction-bidimensional}.
{Our main combinatorial result reveals a wide  family of graph classes that satisfy the 
SQG{\bf C} property. This family includes, in particular, bounded-degree string graphs.}
This considerably extends the applicability of bidimensionality theory 
for  contraction bidimensional problems.
 \end{abstract}

\medskip

\noindent{\bf Keywords:} Treewidth, Bidimensionality, Parameterized Algorithms
\section{Introduction}
\label{intrintr}
 
{\sl Treewidth} is one of most well-studied parameters in graph algorithms. It serves as a measure of how close a graph is to the topological structure  of a tree (see \autoref{defprel} for the formal definition). 
Gavril is the first to introduce the concept in~\cite{Ga1974}
but 
it obtained 
 its name in the second paper of the Graph Minors series of Robertson and Seymour 
in~\cite{RoSe1986-II}.
Treewidth has extensively used in graph algorithm design due to the fact that 
a wide class of intractable problems in graphs becomes tractable when restricted on graphs of bounded treewidth~\cite{ArLaSe1991,Co1990,Co1997}.
Before we present some key combinatorial properties of treewidth, we need some definitions.

\paragraph{Graph contractions and minors.}

Our first aim is to define two parameterized versions of the contraction relation on graphs.

\begin{definition}[Contractions]\label{allmincon}
Given a non-negative integer $c$, 
two graphs $H$ and $G$, and 
a surjection $\sigma:V(G)\rightarrow V(H)$ we write $H\leq _{\sigma}^{c} G$
if 
\begin{itemize}
\item for every $x\in V(H)$, the graph $G[\sigma^{-1}(x)]$ is a non-empty graph  (i.e., a graph with at least one vertex) {of {\sl diameter} at most $c$} and 
%
\item  for every $x,y\in V(H),$ $\{x,y\}\in E(H) \iff G[\sigma^{-1}(x)\cup \sigma^{-1}(y)]$ is connected.
\end{itemize}
We say that $H$ is a {\em $c$-diameter contraction} of $G$ if there exists a surjection $\sigma:V(G)\rightarrow V(H)$ such that $H\leq_{\sigma}^{c}G$ and we write  this $H\leq^{c}  G$.
Moreover, if $\sigma$ is such that  for every $x \in V(H)$, 
$|\sigma^{-1}(x)|\leq c'$, 
then we say that $H$ is a {\em $c'$-size contraction} of $G$, and we write $H\leq^{(c')}  G$.  Given two graphs $G$ and $H$, if there exists an integer $c$ such that  $H\leq^{c}G$, then we say that $H$ is a {\em contraction} of $G$, and we write  $H\leq G$.
Moreover, if there exists a subgraph $G'$ of $G$ such that $H \leq G'$, we say that $H$ is a \emph{minor} of $G$ and we write this $H\preceq G$. Given a graph $H$, we denote by ${\sf excl}(H)$ the class of graphs that exclude $H$ as a minor.
\end{definition}

\subsection{Combinatorics of treewidth}

One of the most celebrated structural results on treewidth  is the following:
\begin{proposition}
\label{mainexcl}
There is a function $f: \Bbb{N}\rightarrow\Bbb{N}$ such that 
every  graph excluding a $(k\times k)$-grid as a minor has 
treewidth at most $f(k)$.
\end{proposition}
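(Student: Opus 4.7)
The plan is to prove the contrapositive: there is a function $f$ such that if $\tw(G) > f(k)$, then $G$ contains the $(k\times k)$-grid as a minor. This is the celebrated Excluded Grid Theorem of Robertson and Seymour, and I would follow the classical three-step route through a chain of dual min-max objects.

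First, I would pass from treewidth to a \emph{bramble} via the Seymour--Thomas duality, which states that $\tw(G) \geq h$ if and only if $G$ admits a bramble of order at least $h+1$. So a graph of sufficiently large treewidth contains a bramble $\mathcal{B}$ of large order $k'$, where $k'$ is a suitably chosen function of $k$. Equivalently, one could work with \emph{havens} or \emph{tangles} of the appropriate order; the choice is mostly cosmetic and does not affect the argument.

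Second, from a bramble of large order I would extract a large \emph{well-linked set}: a vertex set $W$ of size roughly proportional to the bramble order, such that for any two equal-size subsets $A, B \subseteq W$ there exist $|A|$ vertex-disjoint paths from $A$ to $B$ in $G$. This step uses Menger's theorem together with the fact that no small separator can split a bramble's touching structure.

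Third, and this is where the real work lies, I would convert well-linkedness into an honest grid minor. The plan is to iteratively carve out "rows" and "columns": use the well-linked set to find a long family of vertex-disjoint paths $P_{1},\dots,P_{k}$ (the prospective rows), then route a second family $Q_{1},\dots,Q_{k}$ (the prospective columns) so that each $Q_{j}$ crosses each $P_{i}$ in the correct cyclic order. One then contracts and cleans up the resulting topological structure to recover a subdivision, and then a minor, of the $(k\times k)$-grid. The delicate issue is controlling the order in which the $Q_{j}$'s meet the $P_{i}$'s, which is handled by a sequence of rerouting and \emph{bending} arguments.

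The main obstacle is precisely this third step: forcing the two families of paths into a grid-like pattern without losing too many of them. Even obtaining \emph{some} computable $f$ is highly nontrivial and was the core of Robertson and Seymour's original argument, which yields an enormous (non-elementary) $f$. Subsequent work (Robertson--Seymour--Thomas, Leaf--Seymour, Kawarabayashi--Kobayashi, and eventually Chekuri--Chuzhoy) has refined this last step to give polynomial bounds, but in every version the geometric straightening of the routing into a grid is the technical heart of the proof. For the statement as written, any computable $f$ suffices, so I would aim for the cleanest (non-optimal) version of the routing argument rather than pursuing a tight bound.
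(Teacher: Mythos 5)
The paper does not actually prove \autoref{mainexcl}: it is the Excluded Grid Theorem, which the authors quote as a known result with references to Robertson--Seymour~\cite{RoSe1986-V}, Robertson--Seymour--Thomas~\cite{RoSeTh1994}, Diestel et al.~\cite{DiJeGoTh1999}, and Chuzhoy--Tan~\cite{ChuzhoyT21towar} for the best current bound on $f$. So there is no in-paper argument to compare against; the relevant benchmark is the cited literature. Your outline --- contrapositive, treewidth-to-bramble (or tangle/haven) duality, extraction of a large well-linked set, and then routing two families of disjoint paths into a grid pattern --- is the standard modern route and matches the structure of the proofs in those references. You also correctly identify that any computable $f$ suffices for the statement as written, which is all the paper needs (the paper's own contribution concerns the \emph{contraction} analogue and subquadratic bounds, not this proposition).

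That said, as a proof your proposal has a genuine gap: the third step, which you yourself flag as ``where the real work lies,'' is only described, not carried out. Converting a large well-linked set into a $(k\times k)$-grid minor is the entire content of the theorem; ``a sequence of rerouting and bending arguments'' is a placeholder for roughly the whole of~\cite{RoSe1986-V} or~\cite{DiJeGoTh1999}. In particular, obtaining the row paths $P_1,\dots,P_k$ and then forcing the column paths $Q_1,\dots,Q_k$ to meet them in a consistent linear order cannot be done by Menger's theorem alone; one needs an intermediate combinatorial object (an ``externally connected'' set, a mesh, or a large complete-minor/large-grid dichotomy) and a counting argument showing that among many crossing patterns some large subfamily is order-respecting. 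If the intent is to cite the theorem, your summary is accurate and adequate; if the intent is to prove it, the proposal is an honest plan but not yet a proof.
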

A proof of \autoref{mainexcl} appeared for the first time by Robertson and Seymour 
in~\cite{RoSe1986-V}. Other proofs, with better bounds to the function $f$, appeared 
in~\cite{RoSeTh1994} and later in~\cite{DiJeGoTh1999} (see also~\cite{KaKo2012,LeSe2015}). Currently, 
the best bound for $f$ is due to  Chuzhoy, who recently proved in~\cite{ChuzhoyT21towar}
that $f(k)=k^{{9}}\cdot \log^{\Ocal(1)}k$. On the other hand, it is possible to 
show that \autoref{mainexcl} is not correct when $f(k)=\Ocal(k^{2}\cdot\log k)$ 
(see~\cite{Th2012}).

The  potential of \autoref{mainexcl} on graph algorithms has been 
capitalized by the {\sl theory of bidimensionality} that
was introduced  in~\cite{DeFoHaTh2005} and has been   further  developed 
in~\cite{DeHa2005,DeHa2008theb,DeFoHaTh2005a,FoLoRaSa2011,FoLoSaTh2010,GrKoTh2014,FoGoTh2011,DeHaTh06,FoGoTh2009,FoLoSa2012}. This theory offered general techniques for designing efficient fixed-parameter algorithms and approximation schemes for {\sf NP}-hard graph problems in broad classes of graphs (see~\cite{DeFoHaTh2013,DeHa2008,FoDeHa2015,De2010,DeHa2004}). 
In order to present the result of this paper we  first give  a brief presentation of this theory and of its applicability.

\paragraph{Optimization parameters and bidimensionality.}

A {\em graph parameter}  is a function ${\bf p}$ mapping graphs to non-negative integers.
We say that {\bf p} is a {\em minimization graph parameter} if
${\bf p}(G)=\min\{k\mid \exists S\subseteq V(G): |S|\leq k \mbox{ and } \phi(G,S)={\tt true}\},$   where $\phi$ is a some predicate on $G$ and $S$.    Similarly, we say that {\bf p} is a {\em maximization graph parameter} if in the above definition we we replace $\min$ and $\leq$ by $\max$ and $\geq$ respectivelly.
Minimization or maximization parameters are briefly called {\em optimization parameters}.

\begin{definition}[Bidimensionality]
Given two real functions $f$ and $g$, we use the term $f \gtrsim g$ to denote that 
$f(x)\geq g(x)- o(g(x))$.
A graph parameter {\bf p} is {\em minor-closed} (resp. {\em contraction-closed}) when $H\preceq G\Rightarrow{\bf p}(H)\leq {\bf p}(G)$ (resp.  $H\leq  G\Rightarrow{\bf p}(H)\leq {\bf p}(G)$). 
We can now give the two following definitions:
%

\begin{center}
\begin{minipage}{6cm}
{\bf p} is {\em minor-bidimensional} if  \smallskip
\begin{itemize}

\item ${\bf p}$ is minor-closed, 
and 
\item  ${\bf p}(\boxplus_k)\gtrsim \delta\cdot k^2.$
\end{itemize}
\end{minipage}
~~~~~~~~~~~~~~~~
\begin{minipage}{6cm}
{\bf p} is {\em contraction-bidimensional}  if   \smallskip
\begin{itemize}

\item ${\bf p}$ is contraction-closed, 
and

\item  ${\bf p}(\Gamma_{k})\gtrsim \delta\cdot k^2.$
\end{itemize}
\end{minipage}
\end{center}

\noindent for some $\delta>0$. In the above definitions, we use $\boxplus_k$ for  the $(k\times k)$-grid 
and  $\Gamma_{k}$ for  the uniformly triangulated $(k\times k)$-grid (see Figure~\ref{ddsdfgdfgdfg_sdfgdfgdfgdfg}). 
If ${\bf p}$ is a minimization (resp. maximization) graph parameter, we denote by $\Pi_{\bf p}$ the problem
that, given a graph $G$ and a non-negative integer $k$, asks whether ${\bf p}(G)\leq k$ (resp. ${\bf p}(G)\geq k$). We say that a problem is {\em minor/contraction-bidimensional} if it is $\Pi_{\bf p}$
for some bidimensional optimization parameter ${\bf p}$.
\end{definition}

\begin{figure}[h]
\begin{center}
\scalebox{.42}{\includegraphics{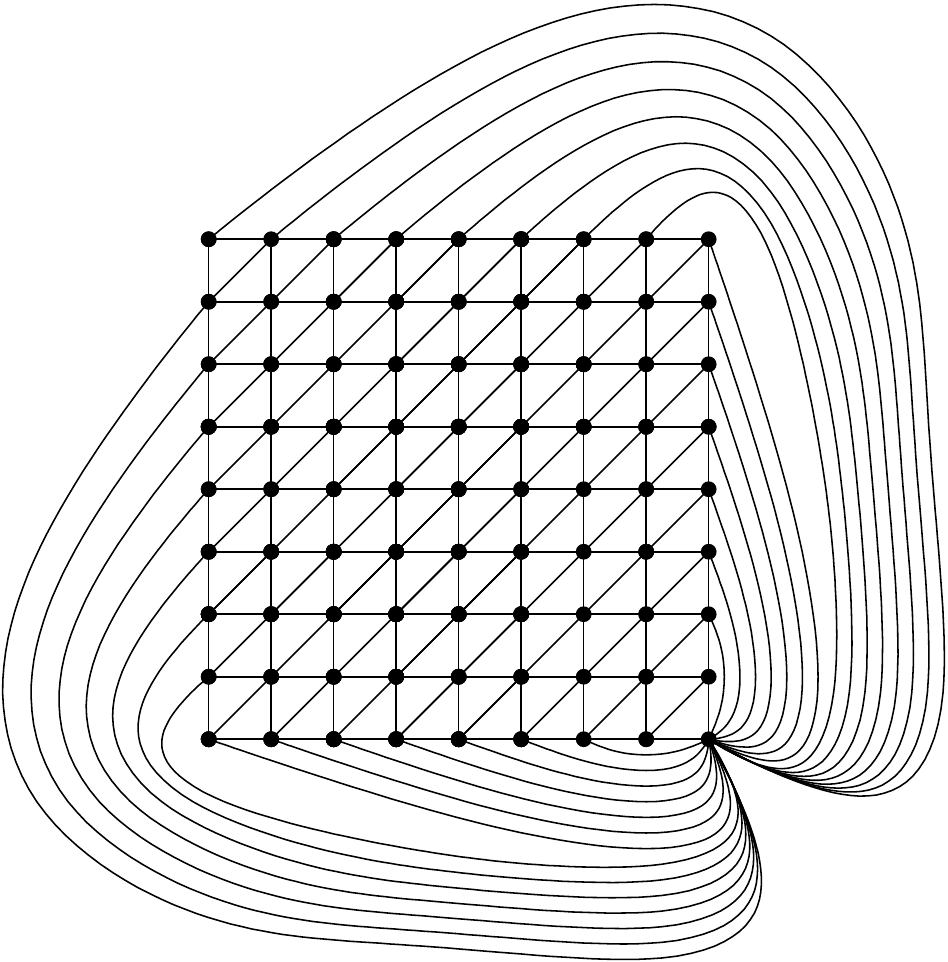}}
\end{center}
\caption{The graph $\Gamma_{9}$.}
\label{ddsdfgdfgdfg_sdfgdfgdfgdfg}
\end{figure}

A (non exhaustive) list of minor-bidimensional problems is:
  {{\sc  Vertex Cover}, {\sc  Feedback Vertex Set},  {\sc Longest Cycle},   {\sc  Longest Path}, {\sc  Cycle Packing},  {\sc Path Packing},   {\sc Diamond Hitting Set},  {\sc Minimum Maximal Matching}, {\sc Face Cover},  and {\sc Max Bounded Degree Connected Subgraph}}.
%
%
Some problems that are contraction-bidimensional (but not minor-bidimensional)
are
{\sl   {\sc  Connected Vertex Cover}, {\sc  Dominating Set}, {\sc  Connected Dominating Set},  {\sc  Connected Feedback Vertex Set}, {\sc  Induced Matching},  {\sc   Induced Cycle Packing},  {\sc  Cycle Domination},  {\sc  Connected Cycle Domination},  {\sc  $d$-Scattered Set}, {\sc  Induced Path Packing}, {\sc $r$-Center}, {\sc connected $r$-Center},  {\sc  Connected Diamond Hitting Set}, {\sc  Unweighted TSP Tour}} (see~\cite{DeHa2008theb,FoDeHa2015,Thilikos15bidi}).

\paragraph{Subquadratic grid minor/contraction property.} 

In order to present the meta-algorithmic potential of bidimensionality theory we need to define some {\sl property on graph classes} that defines the horizon of its applicability. 

\begin{definition}[SQG{\bf C} and SQG{\bf C}]
Let ${\cal G}$ be a graph class. We  say that ${\cal G}$  
 has the {\em subquadratic grid minor property} (SQG{\bf M} property for short) if there exist a constant  $1 \leq c < 2$ such that every graph $G \in {\cal G}$ which excludes $\boxplus_{t}$ as a minor, for some integer $t$, has treewidth $\Ocal(t^c)$. In other words, this property holds for ${\cal G}$ if \autoref{mainexcl}
can be proven for a sub-quadratic $f$ on the graphs of ${\cal G}$.

 Similarly, we  say that ${\cal G}$  
 has the {\em subquadratic grid contraction property} (SQG{\bf C} property for short) if there exist a constant  $1 \leq c < 2$ such that every graph $G \in {\cal G}$ which excludes $\Gamma_{t}$ as a contraction, for some integer $t$, has treewidth $\Ocal(t^c)$. 
For brevity we say that ${\cal G}\in\sqgm(c)$ (resp. ${\cal G}\in\sqgc(c)$) if ${\cal G}$  
 has the SQG{\bf M} (resp SQG{\bf C}) property for $c$. 
Notice that $\sqgc(c)\subseteq \sqgm(c)$ for every $1 \leq c < 2$.
\end{definition} 

\subsection{Algorithmic implications}

\label{resbidi}
The meta-algorithmic consequences of bidimensionality theory are summarised as follows.
 Let ${\cal G}\in\sqgm(c)$, for $1\leq c<2$, and let {\bf p} be a minor-bidimensional-optimization parameter.\medskip

\begin{itemize}
\item[{\bf [A]}]  As it was observed in~\cite{DeFoHaTh2005},  the problem $\Pi_{\bf p}$ can be solved in $2^{o({k})}\cdot n^{\Ocal(1)}$ steps on ${\cal G}$, 
given that the computation of ${\bf p}$ can be done in 
$2^{\Ocal({\bf tw}(G))}\cdot n^{\Ocal(1)}$ steps (here ${\bf tw}(G)$ is the treewidth of the input graph $G$). This last condition can 
be implied by a purely meta-algorithmic condition that is  based on some 
variant of {\sl Modal Logic}~\cite{Pi2011}. 
There is a wealth of results that yield the last  condition for various 
optimization problems either in  classes satisfying the SQG{\bf M} propety~\cite{RuSaTh2014,DoFoTh2008,DoFoTh2006,DoFoTh2006,RuSaTh2012} or to general graphs~\cite{CyNePiPiRoWo2011,BoCyKrNe2015,FoLoSa2014}. \medskip

\item[{\bf [B]}]    As it was shown in~\cite{FoLoSaTh2010} (see also~\cite{FoLoSa2016}), when the predicate $\phi$ 
can be expressed in Counting Monadic Second Order Logic (CMSOL) and ${\bf p}$ satisfies some additional combinatorial property called {\em separability}, then the problem  $\Pi_{\bf p}$ 
admits a {\sl linear kernel}, that is a polynomial-time algorithm that transforms $(G,k)$
to an equivalent instance $(G',k')$ of $\Pi_{\bf p}$ where $G'$ has size $\Ocal(k)$ and $k'\leq k$.\medskip

\item[{\bf [C]}]    It was proved in~\cite{FoLoRaSa2011} (see also~\cite{FominLS18exclu} and~\cite{FominL0Z20appro}),  that the problem of computing ${\bf p}(G)$ for $G\in{\cal G}$
admits a {\sl Efficient Polynomial Approximation Scheme}  (EPTAS) --- that is an $\epsilon$-approximation
algorithm running in $f(\frac{1}{\epsilon})\cdot n^{\Ocal(1)}$ steps --- given that ${\cal G}$ is hereditary and ${\bf p}$ satisfies the  separability property and some reducibility property (related to CMSOL expresibility).
\end{itemize}

\begin{figure}[h]
\begin{center}
\scalebox{.22}{\includegraphics{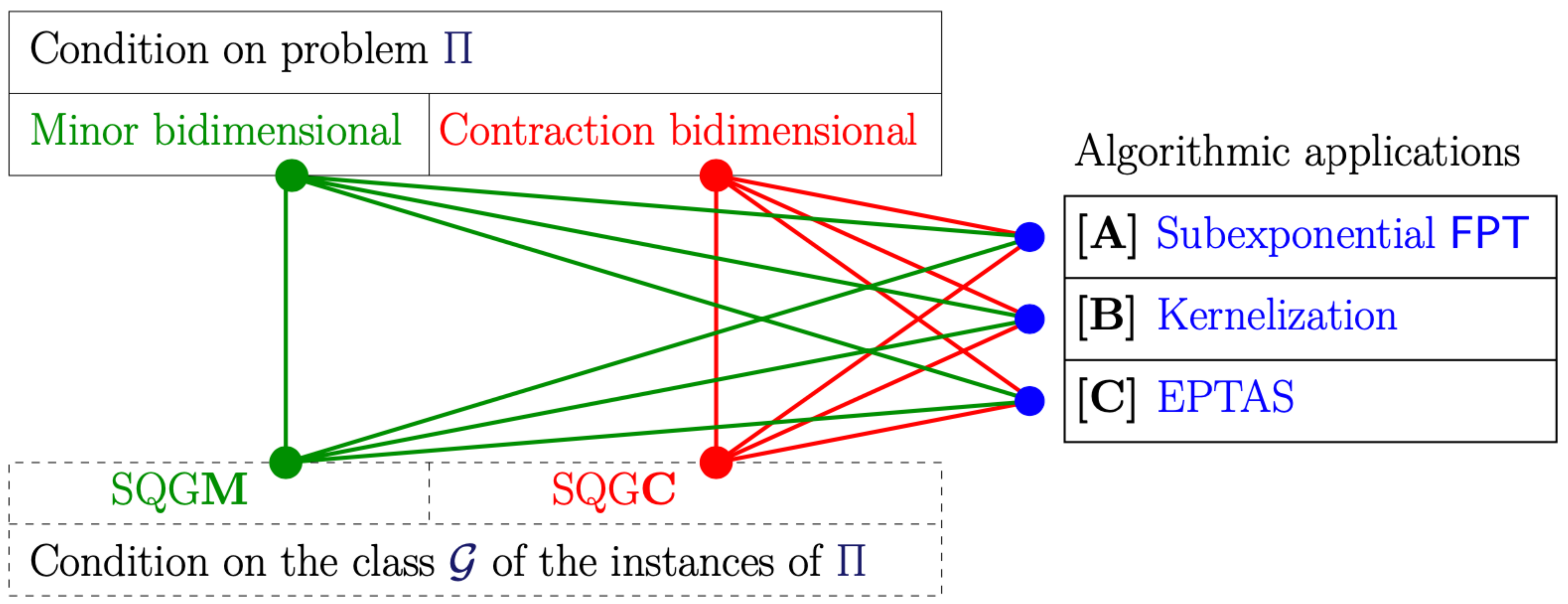}}
\end{center}
\caption{The applicability of bidimensionality theory. The \green{green} lines correspond 
the consequences \cite{GrKoTh2014}} while the \red{red} lines correspond to the result of this paper.
\label{bidipic}
\end{figure}

All above results have their counterparts for {\sl contraction-bidimensional}  problems with the difference that  one should  instead demand that ${\cal G}\in \sqgc(c)$.
Clearly, the applicability of all above results is delimited by the  SQG{\bf M}/SQG{\bf C} property. 
This is schematically depicted in Figure~\ref{bidipic}, where the \green{green} triangles triangles indicate the applicability of minor-bidimensionality and 
the \red{red} triangle   indicate  the applicability of contraction-bidimensionality.
The aforementioned $\Omega(k^2\cdot \log k)$ lower bound to the function $f$ of \autoref{mainexcl}, indicates that $\sqgm(c)$ does not contain all graphs  (given that $c<2$).

As an example we mention the well known $d$-{\em Domination Set} problem (for some $d\geq 1$), asking whether 
a graph $G$ has a set $S$ of at most $k$ vertices such that every vertex in $G$ is within distance at most $d$ from some vertex of $S$.  $d$-{\em Domination Set}  is contraction bidimensional problem 
that satisfies the additional meta-algorithmic conditions in {\bf [A]}, {\bf [B]}, and {\bf [C]}. This implies 
that it can be solved in $2^{O(\sqrt{k})}\cdot n$ time, it admits a linear kernel, and its optimization version admits an EPTAS on every graph class that has the SQG{\bf C}  property.

The emerging  direction of research is to detect the most general classes in $\sqgm(c)$
and $\sqgc(c)$.
Concerning  the SQG{\bf M} property, the following result was proven in~\cite{DeHa2008}.

\begin{proposition}
\label{j9ione}
For every graph $H$, ${\sf excl}(H)\in\sqgm(1)$.
\end{proposition}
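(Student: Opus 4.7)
The plan is to invoke the Robertson-Seymour graph-minor structure theorem for $H$-minor-free graphs and combine it with a linear excluded-grid theorem in the bounded-genus setting, following the approach of Demaine and Hajiaghayi~\cite{DeHa2008}. The goal is to produce, for every fixed $H$, a constant $c_H$ such that any $G\in{\sf excl}(H)$ with $\tw(G)\geq c_H\cdot t$ contains $\boxplus_t$ as a minor; this is exactly the SQG\textbf{M} property with exponent $c=1$.

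First I would apply the structure theorem to obtain integers $g=g(H)$ and $h=h(H)$ such that every $G\in{\sf excl}(H)$ admits a tree decomposition whose torsos are $h$-almost embeddable in a surface of Euler genus at most $g$ (i.e.\ built from a graph embedded in such a surface by adding at most $h$ apex vertices and at most $h$ vortices of depth at most $h$), and whose adhesion sets have size at most $h$. A standard averaging argument over the decomposition yields a torso $T$ with $\tw(T)\geq \tw(G)-h$, so it suffices to prove the linear grid bound on $T$ and then lift the resulting grid minor back to $G$.

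Second, I would establish the linear bound in the bounded-genus setting: there is $c_1=c_1(g)$ such that every graph embeddable in a surface of Euler genus at most $g$ with treewidth at least $c_1\cdot t$ contains $\boxplus_t$ as a minor. The proof proceeds by a face-width analysis, cutting the surface along short noncontractible curves to reduce to a planar instance with treewidth $\Omega(\tw(G))$, and then invoking the linear planar excluded-grid theorem of Robertson, Seymour and Thomas~\cite{RoSeTh1994}. Extending this bound to $h$-almost embeddable torsos is then routine: deleting the $h$ apices loses only an additive $h$ in treewidth, and the $h$ vortices of depth $h$ can be absorbed inside the embedding at the cost of an $O_h(1)$ multiplicative factor, still producing an $O(t)$ bound on $\tw(T)$.

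The main obstacle is the last step: transferring a $\boxplus_t$-minor found in the torso $T$ back to the graph $G$, in spite of the virtual edges introduced by the clique-sum operations at the adhesion cliques. Here I would use that each adhesion has size at most $h$, so a grid minor of size $t\gg h$ in $T$ can be rerouted around the $O_H(1)$ many adhesion vertices at the cost of only a constant multiplicative factor, yielding a $\boxplus_{t'}$-minor in $G$ with $t'=\Omega(t)$. Composing the three steps produces a global constant $c_H$ with $\tw(G)\leq c_H\cdot t$ whenever $G\in{\sf excl}(H)$ excludes $\boxplus_t$ as a minor, establishing ${\sf excl}(H)\in\sqgm(1)$.
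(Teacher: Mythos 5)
First, a remark on what you are being compared against: the paper does not prove this proposition at all --- it is imported verbatim from Demaine and Hajiaghayi~\cite{DeHa2008}, so the only ``proof'' in the text is the citation. Your outline does follow the architecture of that cited work (Robertson--Seymour structure theorem, a linear excluded-grid bound for bounded-genus graphs obtained by cutting along short noncontractible curves and invoking the planar bound of~\cite{RoSeTh1994}, then apices, vortices, and clique-sums), so you are reconstructing the right argument rather than proposing a different one.

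As written, however, the final step has a genuine gap. You claim that a $\boxplus_t$-minor found in a torso $T$ can be transferred to $G$ by ``rerouting around the $O_H(1)$ many adhesion vertices.'' A torso of the tree decomposition is in general adjacent to arbitrarily many other bags: each adhesion set has size at most $h$, but the total number of adhesion sets --- hence of adhesion vertices and of virtual edges inside $T$ --- is not bounded by any function of $H$, and a grid model in $T$ may use virtual edges coming from many different adhesion cliques scattered throughout the torso. Deleting or rerouting around all of them is therefore not an $O_H(1)$-cost operation, and this is precisely the delicate point of the clique-sum step. The known treatments either (i) normalize the decomposition so that each virtual clique is realizable by sufficiently many disjoint paths through the corresponding attached branch, making the relevant part of the torso an honest minor of $G$, or (ii) bypass torsos entirely via the flat wall / weak structure theorem, which yields a large wall that is a genuine subgraph of $G$ after removing $O_H(1)$ apex vertices (the route of~\cite{KaKo2012}). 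Two smaller points: the existence of a torso of large treewidth is not an ``averaging argument'' but the clique-sum inequality $\tw(G)\leq\max_x\tw(T_x)$ (which in fact gives $\tw(T)\geq\tw(G)$ for some torso), and the absorption of vortices at an ``$O_h(1)$ multiplicative factor'' is one of the harder parts of~\cite{DeHa2008}, not a routine one. Until the clique-sum lifting is repaired, the proposal does not establish the proposition.
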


A graph $H$ is an {\em apex graph} if it contains a vertex whose removal from $H$ results to a planar graph. For the SQG{\bf C} property, the following counterpart of \autoref{j9ione}   was  proven in~\cite{FoGoTh2011}.

\begin{proposition}
\label{jd9iodne}
For every apex graph $H$, ${\sf excl}(H)\in\sqgc(1)$.
\end{proposition}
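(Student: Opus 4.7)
The plan is to prove the contrapositive in quantitative form: I would show that there is a constant $c_H$ depending only on $H$ such that if $G \in {\sf excl}(H)$ satisfies ${\bf tw}(G) \geq c_H \cdot t$, then $\Gamma_{t}$ is a contraction of $G$. Setting $t$ to be the largest integer for which $\Gamma_{t} \leq G$ then yields ${\bf tw}(G) = \bigO{t} = \bigO{{\bf bcg}(G)}$, which is exactly the SQG\textbf{C} property with $c=1$. I would split this task into two stages: first find a large plain grid minor, and then upgrade it to a large triangulated-grid contraction using apex-minor-freeness.

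For the first stage, I would invoke the sharpening of \autoref{mainexcl} available for $H$-minor-free graphs when $H$ is an apex graph: there is a constant $c_H^{(1)}$ such that every $G \in {\sf excl}(H)$ with ${\bf tw}(G) \geq c_H^{(1)} \cdot k$ contains $\boxplus_{k}$ as a minor. This linear excluded grid bound for apex-minor-free graphs is due to Demaine and Hajiaghayi and is the essential reason \autoref{j9ione} can be strengthened to $c=1$ in the apex-minor-free case. Choosing $k = \alpha t$ for a sufficiently large constant $\alpha = \alpha(H)$, I obtain a witness $\{B_{i,j}\}_{1\leq i,j\leq \alpha t}$ of an $(\alpha t)\times(\alpha t)$-grid minor inside $G$.

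The second stage is the main technical step and the place where the assumption that $H$ is apex is crucial. Starting from the $(\alpha t)\times(\alpha t)$-grid model, I would consider the vertices of $G$ not covered by the branch sets, together with the edges going from these vertices to the branch sets, as ``flaps'' attached to a large planar region. The goal is to locate a $t\times t$ subpattern of the grid in which one can select an enlargement of each branch set and an extra diagonal connection per face, so that the resulting contraction pattern realizes $\Gamma_t$. Equivalently, for each $2\times 2$ cell of the subgrid I need to find either a path in $G$ (possibly through the outside of the grid model) that connects two diagonal branch sets, or an extension of one branch set that reaches a diagonal neighbor. I would use an averaging or pigeonhole argument over the $\Omega(\alpha^2 t^2)$ cells of the large grid minor, showing that if a constant fraction of cells admit no such diagonal connection, then a large planar sub-minor together with a vertex outside the model (serving as an apex) can be combined to realize $H$ as a minor of $G$.

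The main obstacle is precisely this last step: producing the apex-like structure from ``many missing diagonals''. The idea is that the absence of a diagonal connection in a cell forces a local planar embedding of the surrounding grid piece in which one side must stay separated from the other; however, since $G$ is connected and contains $\Omega(t^2)$ such cells, the collection of ``separating'' flaps can be grouped around a single vertex or a small connected subgraph that reaches many grid cells simultaneously. Treating this small subgraph as the apex and the surrounding grid as the planar part yields an apex minor of arbitrary size, contradicting $G \in {\sf excl}(H)$ as soon as $t$ exceeds a constant depending on $|V(H)|$. Quantifying this dichotomy carefully to keep the constants linear in $t$ is the delicate part, but it proceeds along standard lines once the grid minor and the apex structure are properly identified, and it is exactly what gives the SQG\textbf{C} property with exponent $c=1$.
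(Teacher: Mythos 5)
First, note that the paper does not prove \autoref{jd9iodne} at all: it is imported as a black box from Fomin, Golovach, and Thilikos~\cite{FoGoTh2011}, so there is no in-paper proof to compare against. Your two-stage plan (get a linear-size grid minor, then use apex-minor-freeness to upgrade it to a $\Gamma_t$-\emph{contraction}) does match the shape of that external argument, and your stage 1 is fine --- although the linear excluded-grid theorem of Demaine and Hajiaghayi~\cite{DeHa2008} holds for \emph{every} $H$-minor-free class (this is exactly \autoref{j9ione}), so apexness plays no role there; it is needed only in stage 2.

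The genuine gap is in stage 2, and it is not just a matter of ``quantifying carefully'': you have misidentified what the obstruction is. A contraction model of $\Gamma_t$ must partition \emph{all} of $V(G)$ into connected branch sets whose adjacency pattern is \emph{exactly} that of $\Gamma_t$; the hard direction is therefore not to \emph{find} the diagonal edges of each cell, but to \emph{absorb} every vertex of $G$ outside the grid model (and every non-grid edge between branch sets) without creating adjacencies that $\Gamma_t$ does not have. The triangulation edges and the boundary-dominating corner of $\Gamma_t$ exist precisely to \emph{tolerate} such extra adjacencies, not as connections one must locate. Consequently, your proposed dichotomy --- ``many cells with missing diagonals force an apex structure'' --- is aimed at the wrong target: one can have every diagonal present and still fail to contract to $\Gamma_t$, e.g.\ because a single connected piece of $G\setminus(\text{grid model})$ attaches to two branch sets that are far apart in the grid, which upon absorption creates a long-range edge. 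The correct dichotomy is: either every attachment is ``local'' in the grid (in which case a suitable $\Gamma_{t'}$-pattern can swallow it), or some connected set touches many spread-out branch sets, in which case contracting that set yields a grid plus a dominating vertex and hence an arbitrary apex minor. You gesture at the second horn, but you give no argument for why the bad attachments can be ``grouped around a single vertex or a small connected subgraph'' --- they are a priori many disjoint connected flaps, each individually harmless, and turning them into one connected apex-like branch set is exactly the technical content of~\cite{FoGoTh2011} that your sketch leaves unproved. As written, the proposal would also need, but does not supply, the routing argument showing that the absorption can be done while keeping $t' = \Omega(t)$.
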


Notice that both above results concern graph classes that are defined by excluding some graph as a minor. For such graphs, \autoref{jd9iodne} is indeed optimal. To see this, consider $K_{h}$-minor free graphs where $h\geq 6$ (these graphs are not apex graphs). Such classes do not satisfy the SQG{\bf C} property:
take $\Gamma_{k}$, add a new vertex, and make it adjacent,  with all its vertices. The resulting 
graph excludes  $\Gamma_{k}$ as a contraction and has treewidth $>k$.

\subsection{String graphs}

An important step extending the applicability of bidimensionality theory further than
$H$-minor free graphs, was done in~\cite{FoLoSa2012} (see also~\cite{FominLS18exclu}). 

\begin{definition}[String graphs, map graphs, and unit disk graphs]\label{fnolki}
{\em Unit disk} graphs are intersections 
graphs of unit disks in the plane and  {\em map} graphs are intersection 
graphs of face boundaries of planar graph embeddings.
We denote by ${\cal U}_{d}$ the set of unit disk graphs (resp. of ${\cal M}_{d}$ map graphs) of maximum degree $d$.
\end{definition}
 The following was proved in~\cite{FoLoSa2012,FominLS18exclu}.

\begin{proposition}
\label{fi9ougs}
For every positive integer $d$, ${\cal U}_{d}\in \sqgm(1)$ and  ${\cal M}_{d}\in \sqgm(1)$.
\end{proposition}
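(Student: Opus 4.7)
The plan is to show that for every $d$ there is a constant $C_d$ such that any $G \in \mathcal{U}_d\cup \mathcal{M}_d$ excluding $\boxplus_t$ as a minor has $\mathsf{tw}(G)\leq C_d\cdot t$. A preliminary observation is that Proposition~\ref{j9ione} cannot be invoked as a black box: neither $\mathcal{U}_d$ nor $\mathcal{M}_d$ is minor-closed, and in fact $\boxplus_t$ itself belongs to $\mathcal{U}_4$ for every $t$, so neither class is contained in any ${\sf excl}(H)$. The argument must exploit the geometric representations directly.

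For map graphs, I would use the classical description of $G \in \mathcal{M}_d$ as the half-square of a planar bipartite ``witness'' graph $B = (V(G)\cup F, E_B)$, where $F$ indexes the regions and $uv \in E(G)$ iff $u,v$ share a region of $F$. The max-degree hypothesis on $G$ forces each region to be incident to at most $d+1$ vertices of $G$, for otherwise those vertices would form a clique of size $>d+1$. Hence the $F$-side of $B$ has degree at most $d+1$. Since $B$ is planar, and therefore $K_5$-minor-free, Proposition~\ref{j9ione} provides $\mathsf{tw}(B)=O(t')$ whenever $B$ excludes $\boxplus_{t'}$ as a minor. Two transfer steps then close the argument: (i) $\mathsf{tw}(G)\leq (d+1)\cdot(\mathsf{tw}(B)+1)$, obtained by enlarging each bag of $B$ with the $G$-vertices incident to its regions; (ii) every $\boxplus_t$-minor of $G$ lifts to a $\boxplus_t$-minor of $B$ by attaching to each branch set the regions incident to it.

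For unit disk graphs no combinatorial witness is available, so the argument must be genuinely geometric. Fix a representation $p : V(G) \to \R^2$ with unit disks realising $G$. The max-degree hypothesis gives a \emph{packing lemma}: any open ball of diameter $1$ contains at most $d+1$ centres, since any two disks whose centres lie within unit distance intersect, so $d+2$ such centres would produce a clique of size $d+2$, contradicting the degree bound. This packing yields a geometric $O(\sqrt{n})$-separator in $G$. Starting from $G$ with $\mathsf{tw}(G)\geq C_d\cdot t$, the polynomial-gap excluded-grid theorem (Proposition~\ref{mainexcl}) extracts a wall of order roughly $t$ inside $G$, and the packing lemma is then used to \emph{rectify} this wall into an honest $\boxplus_t$-minor of $G$.

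The main obstacle will be this rectification step: walls are purely graph-theoretic objects, while the packing controls only the geometric placement of vertices. Bridging the two requires a topological argument showing that an $\Omega(t)$-wall in a bounded-degree unit disk graph must \emph{spread out} geometrically (because the packing limits how many wall vertices can occupy a small region) and hence contains $\Omega(t)$ disjoint horizontal and $\Omega(t)$ disjoint vertical geometric traversals that can be assembled into a $\boxplus_t$-minor by a sweep-line argument. A secondary difficulty, in the map-graph case, is the transfer inequality $\mathsf{tw}(G)=O(d\cdot\mathsf{tw}(B))$: it is folklore, but requires care in handling overlapping bag extensions at vertices shared by many regions.
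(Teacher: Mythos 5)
First, a point of reference: the paper does not prove \autoref{fi9ougs} itself --- it imports it from the cited works of Fomin, Lokshtanov, and Saurabh --- but it re-derives a strengthening through its own machinery: ${\cal U}_d\cup{\cal M}_d\subseteq{\cal S}_{O(d^2)}$, then ${\cal S}_{d}\subseteq {\cal P}^{(1,d)}$ (\autoref{ui3s7fg}), ${\cal P}\in\sqgc(1)$ (\autoref{jd9iodne}), and \autoref{j9iossnde}. The crucial feature of that route is that the grid minor is always extracted inside a planar graph, where the \emph{linear} excluded-grid theorem applies, and is then transferred to the geometric graph through bounded-size and bounded-diameter contractions with only a constant-factor loss.

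Your proposal has two genuine gaps. For map graphs, your transfer step (ii) points the wrong way. What you need is: if the witness graph $B$ contains $\boxplus_{t'}$ as a minor with $t'=\Omega_d(t)$, then $G$ contains $\boxplus_t$ as a minor; only then does ``$G$ excludes $\boxplus_t$'' force $B$ to exclude a grid of order $O_d(t)$, so that \autoref{j9ione} bounds $\tw(B)$ and your inequality (i) bounds $\tw(G)$. The direction you state --- a grid minor of $G$ lifts to one of $B$ --- yields no information about $\tw(B)$ when $G$ excludes a grid, and the direction actually needed is exactly the nontrivial part where the bound on region degrees must be used (branch sets of a grid minor of $B$ mix vertex-nodes and region-nodes, and adjacency through a region node does not automatically project to adjacency in $G$). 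For unit disk graphs the plan is structurally unsound: if you start from $\tw(G)\geq C_d\cdot t$ and invoke \autoref{mainexcl}, the best known bound $f(k)=k^{9}\cdot\log^{O(1)}k$ only returns a wall of order roughly $t^{1/9}$, so even a perfect ``rectification'' would prove ${\cal U}_d\in\sqgm(9)$, which is vacuous since the definition requires $c<2$. Any correct proof must bypass the general excluded-grid theorem entirely and obtain the grid from a planar (or apex-minor-free) surrogate of $G$ for which the linear grid theorem holds --- which is precisely what the string-graph route of this paper, and the original proofs, do. Your packing lemma is correct and is indeed where the degree bound enters, but its role is to build that planar surrogate (a bounded edge-degree string representation), not to rectify a wall produced by \autoref{mainexcl}.
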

\noindent \autoref{fi9ougs} was further extended for 
intersection graphs of more general geometric objects (in 2 dimensions) in~\cite{GrKoTh2014}. To explain the results of~\cite{GrKoTh2014}
we need to define a more general model of intersection graphs.

\begin{definition}(String graphs)\label{stringgg}
Let ${\cal L}=\{L_{1},\ldots,L_{k}\}$ be a collection of lines in the plane.
We say that ${\cal L}$ is {\em normal} if
there is no point belonging to more than two lines.
The {\em intersection graph $G_{\cal L}$} of ${\cal L}$, is the graph whose vertex set is ${\cal L}$
and where, for each $i,j$ where $1\leq i< j\leq k$, the edge $\{L_{i},L_{j}\}$ has multiplicity $|L_{1}\cap L_{2}|$. We denote by ${\cal S}_{d}$ the set containing every graph ${G}_{\cal L}$ where ${\cal L}$ is a normal collection of lines in the plane
and where each vertex of $G_{\cal L}$ has edge-degree at most $d$. i.e.,  
is incident to at most $d$ edges.
We call ${\cal S}_{d}$ {\em string graphs with edge-degree bounded by $d$}.
\end{definition}

 It is easy to observe that 
 ${\cal U}_{d}\cup {\cal M}_{d}\subseteq {\cal S}_{f(d)}$ for some quadratic function $f$.
 Indeed, given a graph $G$ in ${\cal U}_{d}$, for each unit disk of its representation in the plane, we can create a string that corresponds to the perimeter of the disk.
 As all the disks are of the same size, the intersection graph of the strings is homeomorphic to $G$.
 The same applies for map graphs by considering the boundaries of the faces and creating a string for each of them.
 Moreover, apart from the classes considered in~\cite{FoLoSa2012}, ${\cal S}_{d}$ includes a much wider variety of classes of intersection graphs~\cite{GrKoTh2014}. As an example, consider ${\cal C}_{d,\alpha}$ as the class of all graphs that are intersection graphs of $\alpha$-convex  bodies%
 \footnote{We call a set of points in the plane a {\em body} if it is homeomorphic to the closed disk $\{(x, y) \mid  x^2 + y^2 \leq 1\}$. A 2-dimensional  body $B$ is a {\em $\alpha$-convex}  if every two of its points can be the extremes of  a line $L$  consisting of  $\alpha$ straight lines and where  $L\subseteq B$. {\em Convex} bodies are exactly the $1$-convex bodies.} 
in the plane and have {edge}-degree at most $d$.
 In~\cite{GrKoTh2014}, it was proven that  ${\cal C}_{d,\alpha}\subseteq {\cal S}_{c}$ where $c$ depends (polynomially) on $d$ and $\alpha$. Another interesting class from~\cite{GrKoTh2014} is ${\cal F}_{H,\alpha}$ containing all  $H$-subgraph free intersection graphs of $\alpha$-fat\footnote{A collection of convex bodies in the pane is 
{\em $\alpha $-fat} if the ratio between the maximum and the minimum radius of a circle 
where all bodies of the collection can be circumscribed and inscribed
respectively,  is upper bounded by $a$.} families of convex bodies. Notice ${\cal U}_{d}$ can be seen as a special case of both  ${\cal C}_{d,\alpha}$ and ${\cal F}_{H,\alpha}$. (See~\cite{Ma2014} for other examples of classes included in ${\cal S}_{d}$.)

\begin{figure}[ht]
  \centering
\scalebox{.16}{\includegraphics{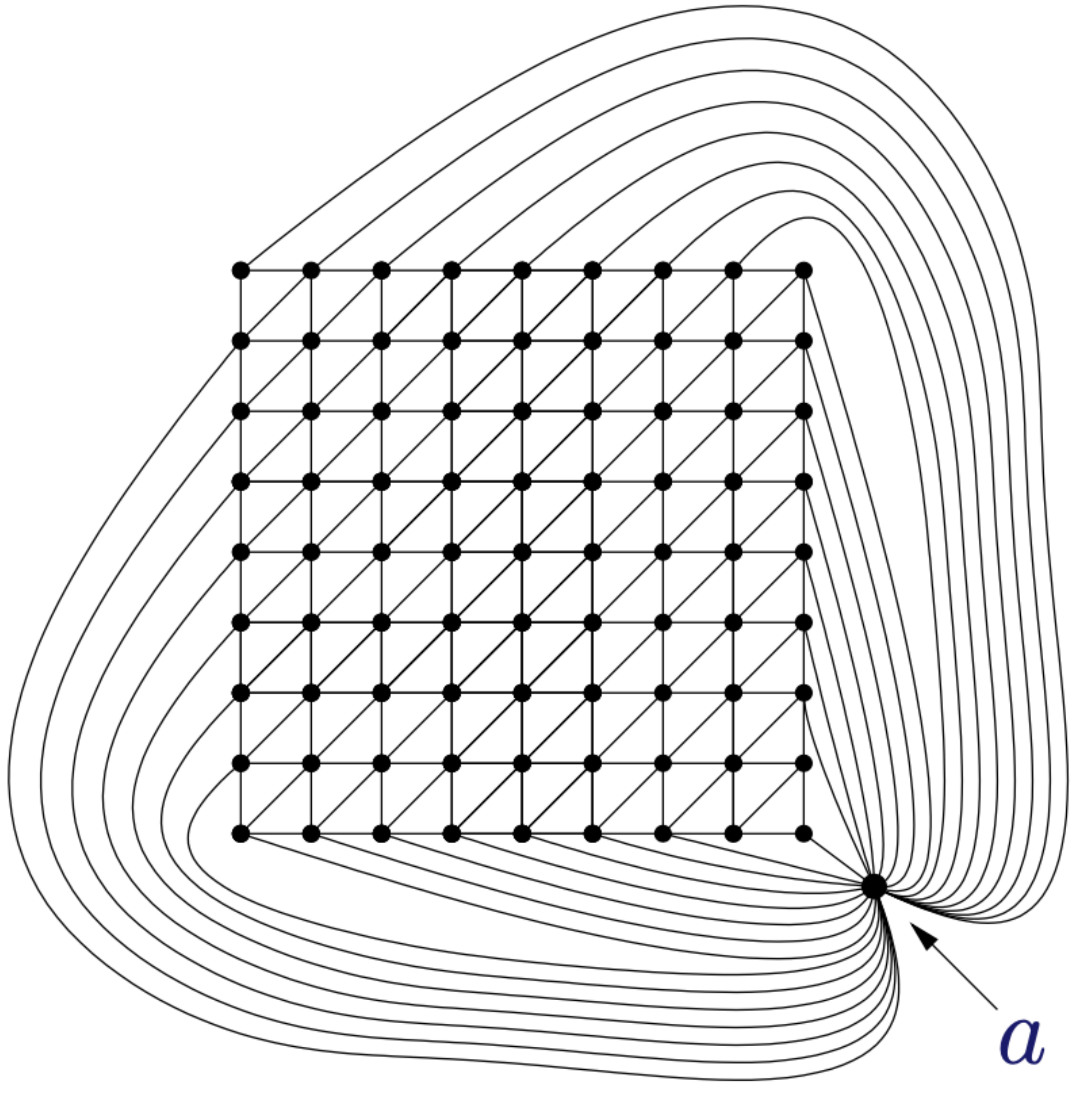}}
\caption{A graphical representation of the definition of $\mathcal{G}^{(c_1,c_2)}$.}
\label{wahyufoi}
\end{figure}

\subsection{Our contribution}

\paragraph{Graph class extensions.}

\begin{definition}[$(c_1,c_2)$-extension]\label{planall}
Given a class of graph $\Gcal$ and two integers  $c_1$ and $c_2$, we define the {\em $(c_1,c_2)$-extension} of ${\cal G}$, denoted by $\Gcal^{(c_1,c_2)}$, as the set containing every graph $H$ such that there exist a graph $G \in \Gcal$ and a graph $J$ that satisfy $G \leq^{(c_1)} J \mbox{ and } H \leq^{c_2} J$ (see \autoref{wahyufoi} for a visualization of this construction). Keep in mind that  $\Gcal^{(c_1,c_2)}$ and  $\Gcal^{(c_2,c_1)}$ are two different graph classes. We also 
denote by ${\cal P}$ the class of all planar graphs. 
\end{definition}

Using the above notation, the two combinatorial results in~\cite{GrKoTh2014}
can be rewritten as follows:

\begin{proposition}
\label{j9iossnde}
Let $c_{1}\geq 1$ and $c_{2}\geq 0$ be two integers. If ${\cal G}\in\sqgc(c)$ for some $1\leq c<2$, then  $\Gcal^{(c_1,c_2)}\in\sqgm(c)$.
\end{proposition}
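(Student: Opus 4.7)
The plan is to use both layers of the $(c_1,c_2)$-extension: the size-bounded layer $G\leq^{(c_1)}J$ transfers treewidth control from $G$ to $H$ with only a constant blow-up, while the diameter-bounded layer $H\leq^{c_2}J$ converts a $\Gamma_{t'}$-contraction of $G$ into a $\boxplus_{\Theta(t')}$-minor of $H$. Fix witnesses $G\in\mathcal G$ and $J$ with surjections $\sigma_G\colon V(J)\to V(G)$ and $\sigma_H\colon V(J)\to V(H)$, whose fibers $T_u=\sigma_G^{-1}(u)$ and $S_h=\sigma_H^{-1}(h)$ have size at most $c_1$ and diameter at most $c_2$ in $J$ respectively.

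First I would carry out a \emph{treewidth transfer}: take a tree decomposition of $G$ of width $\mathsf{tw}(G)$ and substitute each vertex $u\in V(G)$ by the at-most-$c_1$ elements of $T_u$, producing a tree decomposition of $J$ of width at most $c_1(\mathsf{tw}(G)+1)-1$. Since $H$ is a contraction of $J$ and contraction never increases treewidth, $\mathsf{tw}(H)\le\mathsf{tw}(J)=\mathcal O(\mathsf{tw}(G))$.

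The combinatorial heart is the claim that $\Gamma_{t'}\leq G$ implies $\boxplus_{\lfloor t'/C\rfloor}\preceq H$ for some $C=C(c_1,c_2)$. Composing $\Gamma_{t'}\leq G\leq^{(c_1)}J$ yields $\Gamma_{t'}\leq J$ with branch sets $\{T_v\}_{v\in V(\Gamma_{t'})}$ partitioning $V(J)$, hence $\boxplus_{t'}\preceq J$ using the same family. Setting $\phi(v):=\sigma_H(T_v)$, each $\phi(v)$ is connected in $H$. The key geometric input is the \emph{locality of overlaps}: for every $h\in V(H)$ the set $\psi(h):=\{v:T_v\cap S_h\neq\emptyset\}$ has $\Gamma_{t'}$-diameter at most $c_2$, because any two of its vertices are joined by a $J$-path of length $\le c_2$ inside $S_h$, which $\sigma_G$-projects to a walk of the same length in $\Gamma_{t'}$; since $\Gamma_{t'}$-distances coincide with $L_\infty$-distances on the $t'\times t'$ grid, $\psi(h)$ fits inside a grid box of side $c_2+1$. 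I would then extract an honest $\boxplus_{\lfloor t'/C\rfloor}$-minor model in $H$ by placing super-nodes at positions $(IP,JP)$ for $P=\Theta(c_2)$, attaching to each super-node a buffered ``plus'' of axis-parallel grid paths centered at its super-node, and tuning the buffer so that the $L_\infty$-separation between the supports of any two distinct super-branches exceeds $c_2$ (forcing disjointness of the resulting $\tilde\phi$-branch-sets via the $\psi(h)$ locality bound) while super-adjacent pairs still contain a pair of grid-adjacent representatives, producing a genuine $H$-edge (rather than an identification) between the two branches.

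Assembling: if $H$ excludes $\boxplus_t$ as a minor, the contrapositive of the transfer claim forces $G$ to exclude $\Gamma_{Ct}$ as a contraction; invoking $\mathcal G\in\sqgc(c)$ then gives $\mathsf{tw}(G)=\mathcal O((Ct)^c)=\mathcal O(t^c)$, and the treewidth transfer yields $\mathsf{tw}(H)=\mathcal O(t^c)$, establishing $\mathcal G^{(c_1,c_2)}\in\sqgm(c)$. The main obstacle is the harvesting step: reconciling pairwise disjointness of branch sets (including across diagonal super-neighbors) with the need for an honest $H$-edge across every super-adjacent pair, in the presence of $\psi(h)$-induced identifications at grid distance up to $c_2$; the buffered plus construction, whose geometry depends only on $c_1$ and $c_2$, absorbs both constraints at the cost of a constant factor in the grid size, which is exactly what is required to preserve the linear-in-$t$ form needed for the SQG\textbf{M} property with the same exponent $c$.
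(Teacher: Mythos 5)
Your global strategy is the right one and its skeleton is sound: transfer treewidth from $G$ to $H$ through $J$ (your substitution argument is exactly \autoref{lemma:twcont}), and turn a $\Gamma_{t'}$-contraction of $G$ into a large grid minor of $H$ so that the implication can be run in contrapositive. The locality observation is also correct and is the right key fact: each fiber $S_h$ projects to a set $\psi(h)$ of bounded $\Gamma_{t'}$-diameter, so branch images indexed by grid regions at distance more than $c_2$ are disjoint, while touching regions have touching images. The problem is that the harvesting step --- which you yourself flag as the main obstacle --- is not actually carried out, because the two conditions you impose on the buffered plus are mutually exclusive. If the supports of \emph{every} two distinct super-branches have $L_\infty$-separation exceeding $c_2\geq 1$, then no super-adjacent pair can contain grid-adjacent representatives, since adjacency forces separation at most $1\leq c_2$. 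Conversely, if super-adjacent supports do contain adjacent representatives, the locality bound only guarantees that their images in $H$ \emph{touch}; they may intersect, and an intersection is an identification, not an edge, so the family is not a minor model of $\boxplus_{t'/C}$. What is missing is the argument converting ``touching'' into ``disjoint plus an honest $H$-edge'': for instance, keep well-separated cores, join adjacent cores by corridors whose images are connected sets meeting two disjoint core images, and extract from a shortest $H$-path across each corridor a crossing edge together with a consistent reassignment of the corridor vertices to one side. That is a genuine additional step (in effect a local version of the clash procedure in the proof of \autoref{lemma_4}), not something the geometry of the plus absorbs for free. A second, more easily repaired slip: $\Gamma_{t'}$-distance does \emph{not} coincide with $L_\infty$-distance, because the corner $(t'-1,t'-1)$ is adjacent to the entire boundary of the grid; a set of $\Gamma_{t'}$-diameter at most $c_2$ therefore need not fit in a small box unless you first retreat to the subgrid at depth greater than $c_2$ from the boundary.

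For context, the paper does not reprove this proposition (it is imported from the literature); it proves the stronger \autoref{j9io3nj78}, whose conclusion is membership in $\sqgc(c)$ rather than $\sqgm(c)$. There the overlap problem is resolved globally in \autoref{lemma_4}: the vertex set of $G$ is reorganized into connected countries that absorb every cloud $\phi^{-1}(a)$, $a\in V(H)$, entirely, via the expansion/clash/annex procedures, so that the contraction of $G$ onto $\hat{\Gamma}_{k'}$ factors through the contraction onto $H$. Your route, even once repaired, yields only the minor version; that would suffice for the proposition as stated, but not for the contraction statement the paper actually needs.
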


\begin{proposition}
\label{ui3s7fg}
For every $d\in\Bbb{N}$, ${\cal S}_{d}\subseteq{\cal P}^{(1,d)}$. 
\end{proposition}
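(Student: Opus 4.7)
The plan is to show that every $G = G_{\mathcal{L}} \in \mathcal{S}_d$ arises as a $d$-diameter contraction of some planar graph $J$. Combined with the trivial identity contraction $J \leq^{(1)} J$ (valid because a planar $J$ satisfies $J\in\mathcal P$), this immediately yields $G \in \mathcal{P}^{(1,d)}$.

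Starting from a drawing of $\mathcal{L}$, I construct $J$ by the standard planarization trick. For every crossing point $p$ at which two strings $L_i$ and $L_j$ of $\mathcal{L}$ meet, I introduce two new vertices $v_p^i$ and $v_p^j$ together with a \emph{crossing edge} $\{v_p^i, v_p^j\}$, and along each string $L_i$ I link the vertices $v_p^i$ (for the crossings $p$ lying on $L_i$) into a path, ordered as these crossings appear along $L_i$. A string carrying no crossing receives a single isolated dummy vertex, so that its blob is non-empty.

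I then define the surjection $\sigma(v_p^i) = L_i$ and check the two conditions of Definition~\ref{allmincon}. Each preimage $\sigma^{-1}(L_i)$ is by construction a path; the number of its vertices equals the number of crossings lying on $L_i$, which is bounded by the edge-degree of $L_i$ in $G$ and therefore by $d$, so $\sigma^{-1}(L_i)$ is a connected subgraph of diameter at most $d-1\leq d$. The induced subgraph $J[\sigma^{-1}(L_i) \cup \sigma^{-1}(L_j)]$ is connected exactly when some crossing edge $\{v_p^i, v_p^j\}$ is present in $J$, i.e., exactly when $L_i \cap L_j \neq \emptyset$, which is exactly the adjacency condition in $G$. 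Thus $G\leq^{d}J$.

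The only substantive step is the planarity of $J$: starting from the drawing of $\mathcal L$, inside a small disk around each crossing $p$ I replace the ``$\times$'' pattern by a local planar gadget implementing the two vertices $v_p^i$, $v_p^j$ and the crossing edge, with the two branches of $L_i$ and $L_j$ reattached appropriately. The normality of $\mathcal L$ guarantees that the disks around distinct crossings can be made pairwise disjoint, so the local substitutions are independent and no new crossings appear globally. The careful specification of the rotation systems around each pair $v_p^i, v_p^j$ (and, if needed, of auxiliary vertices inside each gadget to reroute the two string segments without reintroducing intersections) is the only technical point of the argument; the rest of the proof is bookkeeping.
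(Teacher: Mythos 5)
Your construction of $J$ (one vertex $v_p^i$ per incidence of a string $L_i$ with a crossing point $p$, a path along each string, and a crossing edge $\{v_p^i,v_p^j\}$ at each crossing) is exactly the intermediate graph the paper uses, and your verification that $G_{\mathcal L}\leq^{d}J$ is fine. The gap is in the step you yourself flag as ``the only technical point'': $J$ is \emph{not} planar in general, so you cannot take the witness from $\mathcal P$ to be $J$ itself via the identity contraction $J\leq^{(1)}J$. Locally, no gadget can work: around a crossing $p$ the four branches alternate $L_i,L_j,L_i,L_j$ on the boundary of the disk, so the connected piece carrying the two $L_i$-branches and the connected piece carrying the two $L_j$-branches are two disjoint connected sets each attached to an interleaved pair of boundary points, and any such pair of sets must cross inside the disk (adding auxiliary vertices does not change this topological obstruction). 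Globally the failure is genuine: take three ``horizontal'' and three ``vertical'' strings pairwise crossing once in a $3\times 3$ grid pattern (this graph is in $\mathcal S_3$). The resulting $J$ has exactly six vertices of degree $3$, namely $h_{12},h_{22},h_{32}$ and $v_{21},v_{22},v_{23}$ in the natural labelling, and one can exhibit nine internally disjoint paths joining every $h_{a2}$ to every $v_{2b}$ through the degree-$2$ vertices; hence $J$ contains a $K_{3,3}$-subdivision and is non-planar.

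The repair is precisely what Definition~\ref{planall} is designed for: the graph $J$ in the definition of $\mathcal P^{(1,d)}$ is \emph{not} required to lie in $\mathcal P$; only the graph $G$ with $G\leq^{(1)}J$ must be planar. The paper's argument therefore takes $G=P$ to be the arrangement graph of $\mathcal L$ (one vertex per crossing point, edges the arcs of strings between consecutive crossings), which is planar because it is literally drawn in the plane, and observes that contracting the pairwise disjoint crossing edges $\{v_p^i,v_p^j\}$ of your $J$ yields exactly $P$; normality of $\mathcal L$ guarantees these edges form a matching, so $P\leq^{(1)}J$. Keeping your (correct) verification that $G_{\mathcal L}\leq^{d}J$ and replacing your planarity claim for $J$ by this contraction to the arrangement graph turns your argument into the paper's proof.
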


We visualise  the idea of the proof of \autoref{ui3s7fg} by some example, depicted in \autoref{asfgsdgasdfasdfsdfsadfsdf}. In \autoref{kil45io} we use the same idea for a more general result.
\autoref{asfgsdgasdfasdfsdfsadfsdf} motivates the definition of the $(c_1,c_2)$-extension of a graph class.
Intuitively, the fact that $H\in {\cal G}^{(c_{1},c_{2})}$ expresses the fact  that 
$H$ can be seen as a ``bounded'' distortion of a graph in ${\cal G}$ (after a fixed number of ``de-contractions'' and contractions).

 \begin{figure}[ht]
\begin{center}
 \scalebox{.9}{\includegraphics{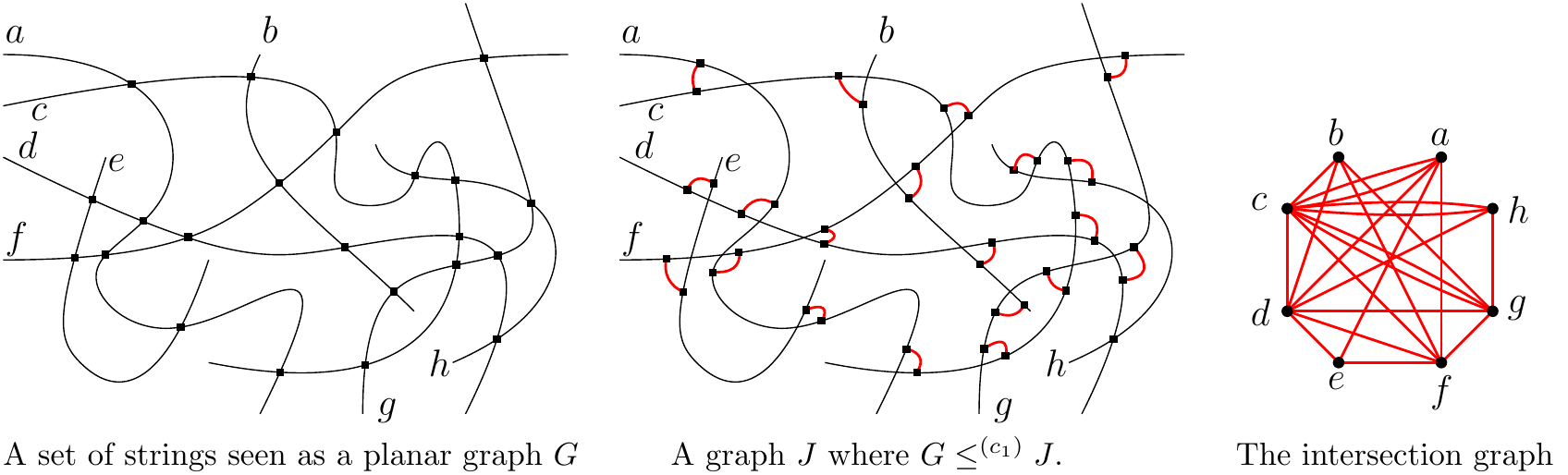}}
 \end{center}
\caption{An example of the proof of \autoref{ui3s7fg}. In the leftmost figure we 
we a collection of lines in the plane ${\cal L}=\{L_{1},\ldots,L_{k}\}$ whose intersection graph $G_{\cal L}$
is depicted in the rightmost figure and has maximum edge degree 9 because of line $c$
that meets other lines in 9 points, therefore $G_{\cal L}\in {\cal S}_{9}$. To see why $G_{\cal L}\in {\cal P}^{(1,9)}$, one may see the leftmost figure are a planar graph $P\in{\cal P}$ where vertices of degree 1 are discarded. The vertices of this planar graph can be seen as the result of the contraction of the red edges (seen as subgraphs of diameter 1) in the graph $J$ in the middle, i.e., $P\leq^{(1)} J$. Finally, the intersection graph $G_{\cal L}$ can be seen as a result of the contraction in $J$ of each one of the paths, on at most 9 vertices, to a single vertex. Therefore $G_{\cal L}\leq^{9} J$, hence $G_{\cal L}$ belongs 
in the $(1,9)$-extension of planar graphs. 
}
\label{asfgsdgasdfasdfsdfsadfsdf}
 \end{figure}

%
\autoref{jd9iodne},  combined with \autoref{j9iossnde}, provided the wider, so far, 
framework on the applicability of minor-bidimensionality: 
$\sqgm(1)$ contains ${\sf excl}(H)^{(c_1,c_2)}$ for every apex graph $H$ and positive integers $c_{1},c_{2}$.
As, by~\autoref{jd9iodne}, ${\cal P}\in \sqgc(1)$, ~\autoref{j9iossnde} and~\autoref{ui3s7fg} directly
classifies in  $\sqgm(1)$ the graph class  ${\cal S}_{d}$,
 and therefore a large family of bounded degree 
intersection graphs  (including ${\cal U}_{d}$ and ${\cal M}_{d}$). As a result of this, the 
applicability  of bidimensionality theory for minor-bidimensional problems has been extended to much wider families (not necessarily minor-closed) of graph classes of geometric nature~\cite{GrKoTh2014}.

\paragraph{Our main result.}


\begin{definition}[Intersection graphs]\label{interd}
Given a graph $G$ and a set $S\subseteq V(G)$ we say that $S$ is a {\em connected}
set of $G$  if $G[S]$ is a connected graph. We also define by ${\cal C}(G)$  the set of all connected subsets of $V(G)$. Given a ${\cal C}\subseteq  {\cal C}(G)$, we define the {\em intersection graph
of ${\cal C}$ in $G$}, denoted by 
$I_{G}({\cal C})$, as the graph whose vertex set is ${\cal C}$, where two vertices $C_{1}$ and $C_{2}$  of  $I_{G}({\cal C})$ 
are connected by an edge if $C_{1}\cap C_{2}\neq\emptyset$, and, moreover, the multiplicity of the edge $\{C_{1},C_{2}\}$ is equal to $|V(C_{1}\cap C_{2})|$.
Given a graph class ${\cal G}$ we define the following class of graphs $${\sf inter}({\cal G})=\bigcup_{G\in {\cal G}}\{I_{G}({\cal C})\mid {\cal C}\subseteq  {\cal C}(G)\}.$$
\end{definition}

 In other words, ${\sf inter}({\cal G})$ contains all the intersection graphs of the connected vertex subsets of each of the graphs in ${\cal G}$.
Given a $d\in\Bbb{N}$, we define  ${\sf inter}_{d}({\cal G})$ as the set of graphs in ${\sf inter}({\cal G})$ that have  edge-degree at most $d$. 

However, also the degree bound is maintained, as indicated by the following easy lemma.

\begin{lemma}
\label{sdlstring}
 For every $d\in\Bbb{N}$,   ${\cal S}_{d}\subseteq {\sf inter}_{d}({\cal P})\subseteq {\cal S}_{d'}$, for some $d'=O(d^2)$.
\end{lemma}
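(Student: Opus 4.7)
The lemma has two inclusions, which I plan to treat separately.

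For the first inclusion ${\cal S}_d\subseteq {\sf inter}_d({\cal P})$, I would take $G_{\cal L}\in{\cal S}_d$ with a normal family ${\cal L}=\{L_1,\ldots,L_k\}$ and build the arrangement graph $P$ whose vertices are the pairwise intersection points of ${\cal L}$ (augmented by one auxiliary vertex on any string meeting no other, so that every set below is nonempty) and whose edges are the maximal sub-arcs of the curves between consecutive vertices. Normality guarantees that this is a proper plane embedding, so $P\in{\cal P}$. Setting $C_i\subseteq V(P)$ to be the vertices that lie on $L_i$ gives a set that induces a path in $P$ (hence is connected), and for $i\neq j$ we have $|C_i\cap C_j|=|L_i\cap L_j|$, so $I_P(\{C_i\}_i)=G_{\cal L}$ as multigraphs with matching edge-degrees. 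This already establishes the first inclusion.

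For the second inclusion ${\sf inter}_d({\cal P})\subseteq {\cal S}_{d'}$ with $d'=O(d^2)$, let $G\in{\cal P}$ be a plane graph and ${\cal C}=\{C_1,\ldots,C_m\}\subseteq{\cal C}(G)$ with $I_G({\cal C})$ of edge-degree at most $d$. A preliminary observation is that for every vertex $v$ the number $t_v=|\{i:v\in C_i\}|$ of parts through $v$ satisfies $t_v\leq d+1$: for any fixed $C_i$ with $v\in C_i$, each other part through $v$ shares at least the vertex $v$ with $C_i$ and so contributes at least one to the edge-degree of $C_i$.

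For the construction I would thicken the planar embedding: surround each vertex $v$ by a small disk $D_v$ and each edge $e$ by a thin tube $T_e$ joining the endpoint disks, keeping everything pairwise disjoint away from the shared interfaces. For each $C_i$ I would pick a rooted spanning tree $T_i$ of $G[C_i]$ and draw $\gamma_i$ as a depth-first tour of $T_i$ that enters each disk $D_v$, for $v\in C_i$, exactly once: descend along a tree-edge tube, perform the crossings inside the disk, exit through another tree-edge tube, and when backtracking after a subtree is exhausted route the curve along a backtrack arc in the annulus just outside $D_v$ instead of re-entering the disk. Inside each $D_v$ I would place the $t_v$ strings through $v$ in general position so that they cross pairwise exactly once with no three concurrent; inside each tube the strings that use it would be drawn as disjoint parallel lanes; and the backtrack arcs of distinct strings around a common $D_v$ would sit on pairwise disjoint radii, so that neither the tubes nor the annular regions produce any unintended crossings.

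By design every crossing of $\gamma_i$ with $\gamma_j$ is confined to some disk $D_v$ with $v\in C_i\cap C_j$ and contributes exactly one intersection per shared vertex, so $|\gamma_i\cap\gamma_j|=|C_i\cap C_j|$ and the resulting string graph matches the target multigraph. The edge-degree of $\gamma_i$ equals $\sum_{v\in C_i}(t_v-1)$, which is the edge-degree of $C_i$ in $I_G({\cal C})$ and is at most $d$; more crudely, $C_i$ has at most $d$ shared vertices (each contributing at least one to its edge-degree) and each shared vertex contributes at most $t_v-1\leq d$ crossings, giving the $O(d^2)$ bound stated in the lemma. I expect the main obstacle to be engineering the DFS curve so that each vertex disk is entered only once without generating spurious crossings between backtrack arcs of different strings at a common vertex; the concentric-radii device outside each $D_v$ is the trick that handles this cleanly.
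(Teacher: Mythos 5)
Your first inclusion (building the arrangement graph of the strings) and your preliminary bound $t_v\leq d+1$ are both fine; the latter is exactly the observation the paper itself uses. The genuine gap is in the second inclusion, at the crux of your construction: the claim that every crossing of $\gamma_i$ with $\gamma_j$ can be confined to the disks $D_v$, one per shared vertex. A backtrack arc of $\gamma_i$ in the annulus around $D_v$ joins the mouths of two tubes at $v$, and it therefore crosses \emph{in full} every tube of an edge of $G$ incident to $v$ whose angular position lies between them on the side it takes --- hence every lane of every string $\gamma_j$ routed through such a tube. Your concentric-radii device only keeps backtrack arcs from crossing each other; it does nothing about backtrack arcs crossing tube lanes, and this cannot be routed away. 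Concretely, if the rotation order at $v$ is $e_1,f_1,e_2,f_2$ with $e_1,e_2\in T_i$ and $f_1,f_2\in T_j$, then the backtrack arc of $\gamma_i$ returning from the $e_2$-tube to the $e_1$-tube must pass over the tube of $f_1$ or of $f_2$, creating crossings with $\gamma_j$ beyond the intended one inside $D_v$; pushing that arc inside $D_v$ instead breaks the one-entry invariant and forces at least four crossings between the two interleaved pairs of arcs. There is even a parity obstruction: when $v$ is a leaf of $T_j$, the two endpoints of $\gamma_j$ on $\partial D_v$ are adjacent on the boundary circle, so any arc of $\gamma_i$ inside $D_v$ meets $\gamma_j\cap D_v$ an even number of times --- exactly one crossing inside the disk is impossible. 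So neither $|\gamma_i\cap\gamma_j|=|C_i\cap C_j|$ nor the claimed edge-degree bound holds for the strings you draw.

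Once these extra crossings are admitted, a second omission becomes fatal: you never shrink the sets $C_i$. The paper's proof first chooses $G$ with $|V(G)|+|E(G)|$ minimum, which forces each $G[C_i]$ to be a tree on at most $2d$ vertices (essentially a minimal tree spanning the at most $d$ shared vertices of $C_i$); only then does it surround each such small tree by a string and bound the number of crossings of two such strings by $O(d^2)$, accepting multiple passages of a string near a vertex but bounding how many there are. In your construction $T_i$ is a spanning tree of all of $G[C_i]$, so the number of backtrack arcs of $\gamma_i$ at $v$ and the number of $T_j$-tubes at $v$ are not bounded by any function of $d$, and the unintended crossings identified above are therefore unbounded --- so not even the weaker $O(d^2)$ conclusion follows. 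The repair is to adopt the paper's minimization step, and then to count crossings per pair of edge-passages of the two (now $O(d)$-size) trees rather than per shared vertex, settling for $O(d^2)$ crossings per pair instead of exact multiplicities.
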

\begin{proof}[Proof (sketch).]
We deal with the less trivial statement that  ${\sf inter}_{d}({\cal P})\subseteq {\cal S}_{O(d^2)}$. For this, let $H\in {\sf inter}_{d}({\cal P})$ such that $H=I_{G}({\cal C})$ for some collection ${\cal C}$ of connected subsets of  $V(G)$, for some $G\in {\cal P}$.
We choose the planar graph $G$ so that $|V(G)|+|E(G)|$ is minimized. This means 
that for every $C\in {\cal C}$, $G[C]$ is a  tree on at most $2d$ vertices.
If we now replace each tree $G[C]$ by a string ``surrounding'' it is easy to observe that two such string cannot have more than $O(d^2)$ points in common.
%
%
%
%
%
\end{proof}


\medskip
 Observe that \autoref{j9iossnde} exhibits some apparent ``lack 
of symmetry'' as the assumption is ``qualitatively stronger'' than the conclusion. This  does not permit the application of bidimensionality for {\sl contraction}-bidimensional parameters on classes further than those of apex-minor free graphs. In other words, the results in~\cite{GrKoTh2014} covered, for the case of ${\cal S}_{d}$, the \green{green} triangles in Figure~\ref{bidipic} but left the \red{red} triangles open.
The main result of this paper is to  fill this gap by proving the following extension of \autoref{j9iossnde}. The main result of this paper is the following.

\begin{theorem}
\label{j9io3nj78}
Let $c_{1}$ and $c_{2}$ be two positive integers. If ${\cal G}\in\sqgc(c)$ for some $1\leq c<2$, then  $\Gcal^{(c_1,c_2)}\in\sqgc(c)$.
\end{theorem}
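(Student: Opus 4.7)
My plan is to prove the contrapositive: if $H \in \Gcal^{(c_1, c_2)}$ has treewidth at least $C \cdot t^c$ for a sufficiently large constant $C = C(c_1,c_2,\Gcal)$, then $\Gamma_t$ is a contraction of $H$. First I fix witnesses $G \in \Gcal$ and a graph $J$ with $G \leq^{(c_1)} J$ and $H \leq^{c_2} J$. A routine tree-decomposition inflation (replace each vertex of an optimal decomposition of $G$ by its at most $c_1$-vertex $J$-preimage) gives $\tw(J)\leq c_1(\tw(G)+1)-1$, and contractions never increase treewidth, so $\tw(H)\leq \tw(J)$. Chaining, $\tw(G)\geq(\tw(H)+1)/c_1 - 1$ is also $\Omega(t^c)$.

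Next, I fix a scaling factor $\kappa=\Theta(c_2)$, to be pinned down in the descent step, and set $s:=\kappa t$. Enlarging $C$ if needed makes $\tw(G)$ exceed the SQGC($c$) threshold of $\Gcal$ at parameter $s$, producing a strict contraction $\pi: V(G)\to V(\Gamma_s)$, so $\Gamma_s\leq G$. Composing with $\rho: V(J)\to V(G)$ witnessing $G \leq^{(c_1)} J$ produces $\pi\circ\rho: V(J)\to V(\Gamma_s)$ with bags $B_x := \rho^{-1}(\pi^{-1}(x))$: each is connected in $J$ (as a union of connected $\rho$-preimages over the connected set $\pi^{-1}(x)$), and the strict ``iff'' adjacency passes through the composition, giving $\Gamma_s\leq J$ strictly.

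The combinatorial heart is descending $\Gamma_s\leq J$ through the $c_2$-diameter contraction $\sigma: V(J)\to V(H)$ with bags $A_y=\sigma^{-1}(y)$. For each $y\in V(H)$, the image $T_y := (\pi\circ\rho)(A_y)\subseteq V(\Gamma_s)$ is connected of $\Gamma_s$-diameter at most $c_2$ (since $\pi\circ\rho$ does not increase distances). I will partition $V(\Gamma_s)$ into cells $C_{i,j}$ of side $\kappa$ indexed by $V(\Gamma_t)$ --- itself a strict $\Gamma_t\leq\Gamma_s$ --- and build candidate bags in $H$ by assigning each $y$ to one of the cells touched by $T_y$ via a canonical rule (e.g.\ the cell containing the $\prec$-smallest element of $T_y$ under a fixed ordering of $V(\Gamma_s)$). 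Connectivity of each resulting bag $W_{i,j}$ in $H$, and the ``positive'' adjacency direction (cells adjacent in $\Gamma_t$ $\Rightarrow$ bags adjacent in $H$), will follow by lifting $J$-paths through the super-cell bags $B''_{i,j}:=\bigcup_{x\in C_{i,j}}B_x$ into $H$-walks via $\sigma$, after choosing the canonical rule so that near each inter-cell boundary there are representatives falling on each side.

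The main obstacle is the strict direction of the contraction, i.e.\ verifying that bags corresponding to \emph{non}-adjacent cells remain non-adjacent in $H$. Taking $\kappa=\Theta(c_2)$ large enough separates horizontally, vertically, and ``main-diagonally'' non-adjacent pairs of cells by $\Gamma_s$-distance exceeding $c_2$, preventing any $T_y$ from bridging them. The delicate case is the unique ``anti-diagonal'' non-adjacent pair among the four cells meeting at each corner of the cell sub-grid, which in the uniformly triangulated $\Gamma_s$ sits at $\Gamma_s$-distance exactly $2$, independently of $\kappa$. I plan to handle this by a local redefinition of the cell partition around every such corner, absorbing the small corner block of $\Gamma_s$-vertices into a single designated cell of the four, so that any $A_y$ inducing a $J$-edge between the anti-diagonal pair must in fact touch the absorbing cell and so cannot produce a spurious $H$-edge. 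Combined with the treewidth chain, this yields a strict $\Gamma_t\leq H$, establishing $\Gcal^{(c_1,c_2)}\in\sqgc(c)$.
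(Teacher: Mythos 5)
Your outer frame matches the paper's: chain $\tw(H)\le\tw(J)\le O(c_1)\cdot(\tw(G)+1)$, invoke the $\sqgc(c)$ bound on $G$ to extract a large $\Gamma_s\leq G\leq J$, and then descend through the $c_2$-diameter contraction $\sigma\colon V(J)\to V(H)$; the paper isolates exactly this descent as its key combinatorial step (\autoref{lemma_4}). Where you diverge is in how the descent is carried out, and that is where the proposal has a genuine gap. Your rule assigns each $y\in V(H)$ to the cell containing the $\prec$-smallest vertex of $T_y$, and you propose to prove connectivity of the resulting bag $W_{i,j}$ by lifting $J$-paths inside the super-cell $\bigcup_{x\in C_{i,j}}B_x$ to $H$-walks via $\sigma$. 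But the $\sigma$-image of such a path is a walk through the set of \emph{all} $y$ with $T_y\cap C_{i,j}\neq\emptyset$, which strictly contains $W_{i,j}$: intermediate vertices of the walk may have their $\prec$-smallest elements in neighbouring cells and hence be assigned elsewhere. So the argument establishes connectivity of overlapping supersets, not of the disjoint bags you actually contract, and no static canonical rule obviously repairs this. This is precisely the difficulty that the paper's iterative machinery --- the expansion, clash, and annex procedures on $\Lambda$-state configurations, which re-route ``freeways'' and re-take connected components each time a ``cloud'' is adjudicated to a state --- is built to overcome.

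Second, you treat the anti-diagonal corner clash but not the boundary of $\Gamma_s$. In $\Gamma_s$ the vertex $(s-1,s-1)$ is adjacent to the entire boundary ring, so any two boundary cells of your tiling contain vertices at $\Gamma_s$-distance at most $2$, independently of $\kappa$. Two diameter-$c_2$ sets $T_{y_1},T_{y_2}$ anchored in far-apart boundary cells can therefore both reach that corner region and induce an $H$-edge $\{y_1,y_2\}$, i.e.\ a spurious adjacency between non-adjacent boundary vertices of $\Gamma_t$; a local corner-absorption does not address this global phenomenon. The paper's remedy is structural: it targets the extended grid $\hat{\Gamma}_{k'}$, collapsing the whole boundary region into a single apex ``country'' $b_{\rm out}$, and only at the very end contracts one apex edge to recover $\Gamma_{k'}$. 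You would need an analogous global treatment of the boundary, in addition to fixing the bag-connectivity issue, before the argument closes.
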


\paragraph{Consequences.}

We call a graph class {\em monotone} if it is closed under taking of subgraphs, i.e., 
every subgraph of a graph in ${\cal G}$ is also a graph in ${\cal G}$. A powerful consequence of \autoref{j9io3nj78} is the following (the proof is postponed in \autoref{psrdodof8r}).
\begin{theorem}
\label{this_oklok}
If ${\cal G}$ is a monotone graph class, where ${\cal G}\in\sqgc(c)$ for some $1\leq c<2$, and $d\in\Bbb{N}$, then ${\sf inter}_{d}({\cal G})\in\sqgc(c)$.
\end{theorem}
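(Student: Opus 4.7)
The plan is to show the inclusion ${\sf inter}_d({\cal G}) \subseteq {\cal G}^{(c_1, c_2)}$ for constants $c_1, c_2$ depending only on $d$, and then invoke \autoref{j9io3nj78}. So let $H = I_G({\cal C}) \in {\sf inter}_d({\cal G})$ with $G \in {\cal G}$ and ${\cal C} \subseteq {\cal C}(G)$. Using monotonicity of ${\cal G}$ (which ensures that any subgraph of $G$ stays in ${\cal G}$), I would pass to a representation $(G,{\cal C})$ of $H$ for which $|V(G)|+|E(G)|$ is minimum. Repeating the minimality argument sketched in the proof of \autoref{sdlstring}, one obtains two structural facts: every vertex $v$ of $G$ lies in at most $d+1$ sets of ${\cal C}$ (because the $\kappa(v)-1$ intersections through a single $v$ already contribute $\kappa(v)-1$ to the edge-degree of each $C \ni v$), and each subgraph $G[C]$ is a tree on at most $2d$ vertices (since, by minimality, $G[C]$ can be taken as a minimum connected subgraph of $G$ spanning the at most $d$ ``shared'' vertices of $C$, and a minimal Steiner subtree on $k$ terminals has at most $2k-2$ vertices).

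Given such a representation, the desired witness graph $J$ is obtained by splitting each vertex of $G$ into one copy per membership in ${\cal C}$. Formally, set $V(J) = \{(v,C) \mid v \in V(G),\ C \in {\cal C},\ v \in C\}$, place an edge between $(u,C)$ and $(v,C)$ whenever $\{u,v\} \in E(G[C])$ (\emph{type-(a)} edges), and an edge between $(v,C_1)$ and $(v,C_2)$ whenever $v \in C_1 \cap C_2$ with $C_1 \neq C_2$ (\emph{type-(b)} edges). I would then verify the two required contractions via the projections $\sigma_1(v,C)=v$ and $\sigma_2(v,C)=C$. The fiber $\sigma_1^{-1}(v)$ is the type-(b) clique on the at most $d+1$ copies of $v$, and each edge $\{u,v\}\in E(G)$, which by minimality lies in some $G[C]$, yields a type-(a) edge between $\sigma_1^{-1}(u)$ and $\sigma_1^{-1}(v)$; hence $G \leq^{(d+1)} J$. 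Dually, $\sigma_2^{-1}(C)$ is isomorphic via type-(a) edges to the tree $G[C]$, so it is connected of diameter at most $2d-1$, and two fibers $\sigma_2^{-1}(C_1), \sigma_2^{-1}(C_2)$ are joined in $J$ by type-(b) edges iff $C_1 \cap C_2 \neq \emptyset$, matching precisely the edges of $H$. Hence $H \leq^{2d-1} J$, and so $H \in {\cal G}^{(d+1, 2d-1)}$.

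Combining the inclusion ${\sf inter}_d({\cal G}) \subseteq {\cal G}^{(d+1,2d-1)}$ with \autoref{j9io3nj78} immediately gives ${\sf inter}_d({\cal G}) \in \sqgc(c)$, since the SQG{\bf C} property is inherited by subclasses. The main obstacle in this plan is the combinatorial step bounding $|V(G[C])|$ by $O(d)$: monotonicity does not permit ``contracting away'' private degree-$2$ vertices of $G[C]$, so the minimum-$(|V|+|E|)$ argument must be executed carefully, essentially by arguing that whenever $G[C]$ exceeds twice the number of its shared vertices, one can either delete a redundant edge on a cycle, prune a private leaf of $G[C]$, or reroute $C$ through an alternative minimum connected subgraph of $G$ spanning the shared vertices, in each case strictly decreasing $|V|+|E|$. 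The remaining steps amount to routine bookkeeping on the projections $\sigma_1$ and $\sigma_2$.
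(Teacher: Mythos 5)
Your overall strategy --- embed ${\sf inter}_d({\cal G})$ into an extension class ${\cal G}^{(c_1,c_2)}$ with $c_1,c_2$ depending only on $d$ and then invoke \autoref{j9io3nj78} --- is exactly the paper's, and your witness graph $J$ with the two projections $\sigma_1,\sigma_2$ matches the construction in \autoref{kil45io} (including the $d+1$ bound on the fibers of $\sigma_1$ via the clique argument). The gap is the step you yourself flag as the main obstacle: the claim that in a $(|V|+|E|)$-minimum representation each $G[C]$ is a tree on at most $2d$ vertices. Minimality does force $G[C]$ to be (essentially) a minimal Steiner tree for its at most $d$ shared vertices, but a minimal Steiner tree on $k$ terminals has at most $2k-2$ \emph{leaf and branch} vertices, not at most $2k-2$ vertices in total: it may contain arbitrarily long paths of private degree-$2$ Steiner vertices (think of a host graph in which the unique connection between two shared vertices of $C$ is a long subdivided edge). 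None of your three repair operations applies to such a configuration --- there is no cycle edge to delete, no private leaf to prune, and nothing to reroute through when the long path is the only connection available in $G$ --- and a monotone class licenses only deletions, not the suppression of degree-$2$ vertices. Consequently the fiber $\sigma_2^{-1}(C)\cong G[C]$ need not have diameter bounded by any function of $d$, and the relation $H\leq^{c_2}J$ fails for any constant $c_2$.

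This is precisely why the paper does not work with ${\cal G}$ directly: it first passes to the dissolution closure ${\cal D}={\sf diss}({\cal G})$, which together with monotonicity is closed under topological minors, so that the private degree-$2$ Steiner vertices can be dissolved and the resulting host $G'=\bigcup_{i}\hat{T}_{i}$ (whose trees then have diameter at most $d-1$) still lies in the class; this is the hypothesis under which \autoref{kil45io} is stated. The price is a separate proof that ${\cal D}\in\sqgc(c)$, namely \autoref{asdfdfsdgfsgfdg}, which rests on the invariance of ${\bf bcg}$ under dissolution, itself a consequence of $\delta(\Gamma_k)\geq 3$ (\autoref{asfsvcxavcsdvvsd} and \autoref{fasdfsdfsd}). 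Your proof can be completed by inserting exactly this detour --- replace ${\cal G}$ by ${\sf diss}({\cal G})$ at the outset, dissolve the private degree-$2$ vertices before building $J$, and justify the replacement with the ${\bf bcg}$-invariance lemma --- but as written the combinatorial core of the argument is missing.
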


\begin{figure}
\begin{center}
\scalebox{1}{\includegraphics{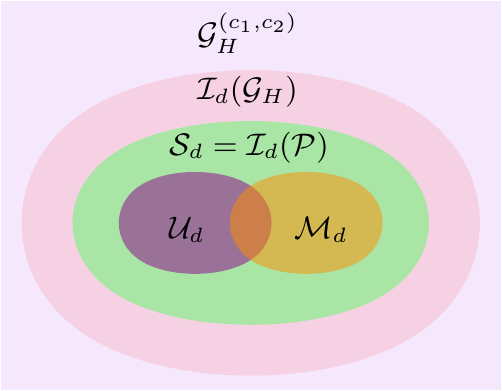}}
\end{center}
\caption{The hierarchy of graph classes where \autoref{j9iossnde} applies. ${\cal U}_{d}$ and ${\cal M}_{d}$ are the bounded-degree unit-disk and map graphs respectively (where the results of~\cite{FoLoSa2012,FominLS18exclu} apply).
${\cal S}_{d}$ are the bounded-degree string graphs, while ${\sf inter}_{d}({\sf excl}(H))$ are the bounded-degree  intersection graphs of connected sets of $H$-minor free graphs, where $H$ is an apex graph.}
\label{fislex}
\end{figure}

Combining~\autoref{jd9iodne} and~\autoref{this_oklok} we obtain that $\sqgc(1)$ contains ${\sf inter}_{d}({\sf excl}(H))$ for every apex graph $H$. 
This extends the applicability horizon of contraction-bidimensionality further than apex-minor free graphs (see \autoref{fislex}). 
As a (very) special case of this, we have that ${\cal S}_{d}\in\sqgc(1)$. Therefore, on ${\cal S}_{d}$, the results described in \autoref{resbidi} apply for contraction-bidimensional problems as well.

\medskip

This paper is organized as follows. In \autoref{defprel}, we give 
the necessary definitions  and some preliminary results.
We prove \autoref{this_oklok} in \autoref{psrdodof8r} while
\autoref{mainipd}
is dedicated to the proof of \autoref{j9io3nj78}. We should  
  stress that this proof
is quite different 
than the one of \autoref{j9iossnde} in~\cite{GrKoTh2014}. 
Finally, \autoref{copsed} contains some discussion and open problems.

\section{Definitions and preliminaries}\label{defprel}

We denote by $\Bbb{N}$ the set of all non-negative integers.
Given $r,q\in\Bbb{N},$ we define $[r,q]=\{r,\ldots,q\}$ and $[r]=[1,r]$.

All graphs in this paper are undirected, loop-less, and may have multiple edges. If a graph has no multiple edges, we call it {\em simple}. Given a graph $G$, we denote by $V(G)$ its vertex set and by $E(G)$ its edge set. 
Let $x$ be a vertex or an edge of a graph $G$ and likewise for $y$; their {\em distance} in $G$, denoted by ${\bf dist}_{G}(x,y)$, is the smallest number of vertices of a path in $G$ that contains them both. 
Moreover if $G$ is a graph and $x \in V(G)$, we denote by $N_G^c(x)$, for each $c \in \mathbb{N}$, the set $\{y \mid y \in V(G),\ {\bf dist}_{G}(x,y) \leq c+1\}$.
 For any set of vertices $S\subseteq V(G)$, we denote by $G[S]$ the subgraph of $G$ induced by the vertices from $S$. 
If $G[S]$ is connected, 
 then we say that $S$ is a \emph{connected vertex set} of $G$.
We define the \emph{diameter} of a connected subset $S$ as the maximum pairwise distance between any two vertices of $S$.  The {\em edge-degree} of a vertex $v\in V(G)$ is the number 
of edges that are incident to it (multi-edges contribute with their multiplicity to this number).

\begin{figure}[h]
\begin{center}
~~~~\scalebox{.14}{\includegraphics{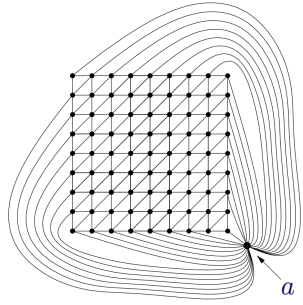}}
\end{center}
\caption{The graph $\hat{\Gamma}_{9}$.}
\label{fig-gamma-reg-hat}
\end{figure}

\begin{definition}[Grids]\label{grids}
The
\emph{$(k\times k)$-grid}, denoted by $\boxplus_k$, is the
graph whose vertex set is $[0,k-1]^2$ and two vertices $(i,j)$ and $(i',j')$ are adjacent if $|i-i'|+|j-j'|=1.$ 
For $k\geq 3$, the graph ${\Gamma}_k$ (resp. $\hat{\Gamma}_k$),
is   defined if we add in $\boxplus_k$
all the edges in  $\{\{(i+1,j),(i,j+1)\}\mid (i,j)\in [0,k-2]^{2}\}$ as well as all the edges 
between $(k-1,k-1)$ (resp. a new vertex $a$) and the vertices in $\{(i,j)\mid [0,k-1]^{2}\setminus [1,k-2]^{2}\}$ that have not been added already. 
For an example of ${\Gamma}_k$ (resp. $\hat{\Gamma}_k$), see \autoref{ddsdfgdfgdfg_sdfgdfgdfgdfg} (resp. \autoref{fig-gamma-reg-hat}).
Notice that ${\Gamma}_k$ is a triangulation of $\boxplus_k$.
In each of these graphs we 
denote the vertices of the underlying grid by their coordinates $(i,j)\in[0,k-1]^2$
agreeing that the upper-left corner (i.e., the unique vertex of degree 3) is the vertex $(0,0)$.
{$\hat{\Gamma}_k$ has two vertices of degree $3$, the top left and the bottom right of the grid part.} We call $\Gamma_{k}$ the {\em uniformly triangulated grid} and  $\hat{\Gamma}_{k}$ the {\em extended uniformly triangulated grid}. 
\end{definition}

\begin{definition}[Treewidth]
A {\em tree-decomposition} of a graph $G$, is a pair $(T,{\cal X})$, where $T$ is a tree and ${\cal X}=\{X_t:\ t\in V(T)\}$ is a family of subsets of $V(G)$, called {\em bags}, such that the following three properties are satisfied:

\noindent$\bullet$\ $\bigcup_{t\in V(T)}X_t=V(G)$,

\noindent$\bullet$\  for every edge $e\in E(G)$ there exists $t\in V(T)$ such that $e\subseteq X_t$, and

\noindent$\bullet$\  $\forall v\in V(G)$, the set $T_{v}=\{t\in V(T)\mid v\in X_{t}\}$ is a connected vertex set of  $T$.

The {\em width} of a tree-decomposition is the cardinality of the maximum size bag minus 1 and the {\em treewidth} of a graph $G$ is the minimum width {over all the} tree-decomposition{s} of $G$. We denote the treewidth of $G$ by $\tw(G)$.  
\end{definition}

\begin{lemma}
\label{lemma:twcont}
Let $G$ be a graph and let $H$  be a  $c$-size contraction of $G$.
Then $\tw(G)\leq (c+1)\cdot (\tw(H)+1)-1$.
\end{lemma}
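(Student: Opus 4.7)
The plan is to lift an optimal tree-decomposition of $H$ to one of $G$ by ``expanding'' each vertex of $H$ back into its preimage under the contraction map $\sigma$. Concretely, fix a surjection $\sigma\colon V(G)\to V(H)$ witnessing $H\leq^{(c)}G$, so that $|\sigma^{-1}(x)|\leq c$ for every $x\in V(H)$, and let $(T,\{X_t\}_{t\in V(T)})$ be a tree-decomposition of $H$ of width $\tw(H)$. For each $t\in V(T)$ define
\[
Y_t \;=\; \bigcup_{x\in X_t}\sigma^{-1}(x).
\]
My claim is that $(T,\{Y_t\}_{t\in V(T)})$ is a tree-decomposition of $G$, and its width is at most $c\cdot(\tw(H)+1)-1$, which is in particular at most $(c+1)\cdot(\tw(H)+1)-1$.

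The bag-size bound is immediate from $|Y_t|\leq \sum_{x\in X_t}|\sigma^{-1}(x)|\leq c\cdot|X_t|$. For the three tree-decomposition axioms I would verify them in order. Vertex coverage is automatic since every $v\in V(G)$ lies in $\sigma^{-1}(\sigma(v))\subseteq Y_t$ for any bag $X_t$ containing $\sigma(v)$. Connectivity of $\{t\mid v\in Y_t\}$ in $T$ follows because this set equals $\{t\mid \sigma(v)\in X_t\}$, which is connected by the corresponding property for $(T,\{X_t\})$ applied to the vertex $\sigma(v)$ of $H$.

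The slightly more delicate axiom is edge coverage, and this is where the definition of $\leq^{(c)}$ (really, of $\leq^c_\sigma$) enters. Given $\{u,v\}\in E(G)$ with $x=\sigma(u)$ and $y=\sigma(v)$, there are two cases. If $x=y$, then $u,v\in\sigma^{-1}(x)\subseteq Y_t$ for any bag with $x\in X_t$, which exists by vertex coverage in $H$. If $x\neq y$, then the subgraph $G[\sigma^{-1}(x)\cup\sigma^{-1}(y)]$ is connected (it contains the edge $\{u,v\}$), and hence by the second bullet in \autoref{allmincon} we have $\{x,y\}\in E(H)$; edge coverage in $H$ then produces a bag $X_t$ with $\{x,y\}\subseteq X_t$, and this bag expands to a $Y_t$ containing both $u$ and $v$. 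Since no step is delicate beyond this case analysis, I expect no real obstacle; the only point to be careful about is that the definition of a $c$-size contraction is invoked with the correct underlying surjection so that the ``edge $\iff$ connected union of preimages'' equivalence from \autoref{allmincon} is available.
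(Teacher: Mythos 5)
Your proof is correct and takes essentially the same route as the paper's: lift an optimal tree-decomposition of $H$ by replacing each vertex of a bag with its $\sigma$-preimage, and verify the three axioms, with edge coverage handled via the ``edge $\iff$ connected union of preimages'' clause of \autoref{allmincon}. The only differences are cosmetic: you get the slightly sharper bound $c\cdot(\tw(H)+1)-1$ by using $|\sigma^{-1}(x)|\leq c$ directly (the paper works with preimages of size at most $c+1$), and in the case $\sigma(u)\neq\sigma(v)$ the connectivity of $G[\sigma^{-1}(x)\cup\sigma^{-1}(y)]$ follows from the edge $\{u,v\}$ \emph{together with} the connectedness of each preimage guaranteed by the bounded-diameter clause of \autoref{allmincon}, not from the presence of that edge alone.
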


\begin{proof}
By definition, since $H$ is a $c$-size contraction of $G$, there is a mapping between each vertex of $H$ and a connected set of at most $c$ edges in $G$, so that by contracting these edge sets we obtain $H$ from $G$. The endpoints of these edges form disjoint connected sets in $G$, implying a partition of the vertices of $G$ into connected sets $\{V_x\mid x\in V(H)\}$, where $|V_x|\leq c+1$ for any vertex $x\in V(H)$.

Consider now a tree decomposition  $(T,{\cal X})$ of $H$. We claim that the pair $(T,{\cal X'})$, where $X'_t:=\bigcup_{x\in X_t} V_x$ for $t\in T$ is a tree decomposition of $G$. Clearly all vertices of $G$ are included in some bag, since all vertices of $H$ did. Every edge of $G$ with both endpoints in the same part of the partition is in a bag, as each of these vertex sets is placed as a whole in the same bag. If $e$ is an edge of $G$ with endpoints in different parts of the partition, say $V_x$ and $V_y$, then this implies that $\{x,y\}\in E(H)$. Thus, there is a node $t$ of $T$ for which $x,y \in X_t$ and therefore $e\subseteq X_t'$. Moreover, the continuity property remains unaffected, since for any vertex $x \in V(H)$ each vertex in $V_x$ induces the same subtree in $T$ that $x$ did. 
\end{proof}

In \autoref{defsassult} we present all the notation that we use in this paper.
\begin{table}[htp]
{\small 
\begin{center}
\begin{tabular}{|c|c|c|}
\hline
\hline
Symbol& Combinatorial object & Definition
\\
\hline\hline
$\boxplus_k$ & $(k\times k)$-grid & \ref{grids}\\ 
\hline
$\Gamma_{k}$ & uniformly triangulated grid &  \ref{grids}\\ \hline
$\hat{\Gamma}_{k}$ & extended uniformly triangulated grid & \ref{grids} \\ \hline\hline
$I_{G}({\cal C})$ & the intersection graph
of a set ${\cal C}$ of connected subsets of the vertices of a graph  $G$ &\ref{interd}\\ \hline
$G_{\cal L}$ & the intersection graph of  a collection ${\cal L}=\{L_{1},\ldots,L_{k}\}$ of lines in the plane&\ref{interd}\\ \hline
\hline
${\cal P}$ & planar graphs&\ref{planall}\\ \hline
${\cal S}_{d}$ & $d$-bounded degree string graphs&\ref{stringgg}\\ \hline
${\cal M}_{d}$ & $d$-bounded degree map graphs&\ref{fnolki}\\ \hline
${\cal U}_{d}$ & $d$-bounded degree unit disk  graphs&\ref{fnolki}\\ \hline\hline 
${\sf excl}(H)$ & the class of graphs  excluding the graph $H$ as a minor&\ref{allmincon}\\ \hline\hline
${\sf diss}({\cal G})$ &  the graph class that is the  dissolution closure of the graph class ${\cal G}$& \ref{fifdiss}\\\hline
$\Gcal^{(c_1,c_2)}$ & the graph class that is the {\em $(c_1,c_2)$-extension} of a graph class ${\cal G}$&\ref{planall}\\ \hline\hline
\end{tabular}
\end{center}
\caption{Graphs, graph classes,  and functions of the paper.}
\label{defsassult}
}
\end{table}%

\section{Proof of \autoref{this_oklok}}
\label{psrdodof8r}

We start with the following useful property of the contraction relation.
We use $\delta(G)$ for the minimum number of edges that are incident to a vertex of the graph $G$. Given a vertex $v$ in $G$ incident to exactly two edges $e_1=\{v,x\}$ and $e_2=\{v,y\}$, the {\em dissolution of $v$} in $G$ is the operation of  removing  $e_1$ and $e_2$ from $G$ and then we add the edge $\{x,y\}$. If the graph $H$ occurs from $G$ after applying some (possibly empty) sequence of vertex dissolutions, then we say that  $H$ is a {\em dissolution} of $G$. We also say that $H$ is a {\em topological minor} of $G$ if $H$ the dissolution of some subgraph of $G$.

\begin{lemma}
\label{asfsvcxavcsdvvsd}
Let $Q$ be a graph where $\delta(Q)\geq 3$ and let $H,G$ be 
graphs where $H$ is a dissolution of $G$. If $Q\leq G$, then $Q\leq H$. 
\end{lemma}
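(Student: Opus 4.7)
The plan is to first reduce, by induction on the number of dissolutions needed to obtain $H$ from $G$, to the case in which $H$ arises from $G$ by a single dissolution. So I fix a degree-$2$ vertex $v\in V(G)$ with neighbors $x,y$, let $H$ be obtained from $G$ by dissolving $v$ (so $V(H)=V(G)\setminus\{v\}$ and $E(H)=(E(G)\setminus\{\{v,x\},\{v,y\}\})\cup\{\{x,y\}\}$), and fix a surjection $\sigma:V(G)\to V(Q)$ witnessing $Q\leq G$. I would then argue that the restriction $\sigma':=\sigma|_{V(H)}$ witnesses $Q\leq H$.

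The key structural point---and the only place where the hypothesis $\delta(Q)\geq 3$ is used---is to locate the branch set $\sigma^{-1}(q)$ relative to $\{x,y\}$, where $q:=\sigma(v)$. Since $\deg_Q(q)\geq 3$, the branch set $\sigma^{-1}(q)$ must be adjacent in $G$ to at least three distinct other branch sets; as $v$ has only two neighbors in $G$, the vertex $v$ alone can certify adjacency to at most two other branch sets, so $|\sigma^{-1}(q)|\geq 2$. Moreover, if neither $x$ nor $y$ belonged to $\sigma^{-1}(q)$, then $v$ would be an isolated vertex of $G[\sigma^{-1}(q)]$, contradicting the connectivity of that branch set. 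Hence at least one of $x,y$ lies in $\sigma^{-1}(q)$.

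It remains to verify that $\sigma'$ is a valid contraction witness in $H$, which splits into two sub-cases: \textbf{(a)} both $x,y\in\sigma^{-1}(q)$, so $v$ contributed only to the internal connectivity of its own branch set, and the new edge $\{x,y\}$ of $H$ restores the connectivity possibly lost by removing $v$ while leaving every inter-branch-set edge untouched; and \textbf{(b)} exactly one of $x,y$, say $x$, lies in $\sigma^{-1}(q)$ while $y\in\sigma^{-1}(q')$ for some $q'\neq q$, in which case $v$ is a pendant of $G[\sigma^{-1}(q)]$ (so internal connectivity survives its removal) and the new edge $\{x,y\}\in E(H)$ takes over the role of the removed edge $\{v,y\}$ in certifying adjacency between $q$ and $q'$. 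In both sub-cases, every $\{p,p'\}\in E(Q)$ still has $H[(\sigma')^{-1}(p)\cup(\sigma')^{-1}(p')]$ connected and no spurious adjacency is introduced, since the only modification of $G$ is the replacement of two edges incident to $v$ by the single edge $\{x,y\}$.

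The main obstacle is thus the structural observation of the second paragraph: one must rule out the pathological configuration where $v$ is isolated in (or is the whole of) its branch set, and this is precisely the point at which $\delta(Q)\geq 3$ is needed---with $\delta(Q)=2$ the statement would fail, as one could contract a long cycle onto a triangle but not onto the triangle's dissolution. Once this pathological case is eliminated, the rest is routine verification that the single replacement edge $\{x,y\}$ of $H$ carries all the connectivity information that the two edges incident to $v$ carried in $G$.
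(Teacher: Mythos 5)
Your proposal is correct and follows essentially the same route as the paper: reduce to a single dissolution, take the restriction of the witnessing surjection $\sigma$ to $V(H)$, and use $\delta(Q)\geq 3$ to rule out the degenerate case where the dissolved vertex's branch set would become empty or disconnected (the paper phrases this as each branch set containing a vertex of degree at least $3$, hence distinct from $v$, while you argue directly that one of $x,y$ must lie in $\sigma^{-1}(q)$). Your case analysis is in fact somewhat more explicit than the paper's about why connectivity of branch sets and inter-branch adjacencies survive the dissolution.
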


\begin{proof}
 As $Q \leq G$ then there exist $\sigma: V(G) \to V(Q)$ such that for
all $x \in V(Q)$, $G[\sigma^{-1}(x)]$ is a non-empty graph and for
every $x,y\in V(Q),$ $\{x,y\}\in E(Q) \iff G[\sigma^{-1}(x)\cup
\sigma^{-1}(y)]$ is connected.

 Let $v$ be a vertex in $G$ incident to exactly two edges
 $e_1=\{v,v'\}$ and $e_2=\{v,v''\}$, and let $G'$ be the graph
 obtained from $G$ after the dissolution of $v$. Let $\sigma': V(G')
 \to V(Q)$ such that for all $z \in V(G')$, $\sigma'(z) = \sigma(z)$.
 As the dissolution maintains connectivity, we have that for every
 $x,y\in V(Q),$ $\{x,y\}\in E(Q) \iff G[\sigma'^{-1}(x)\cup
 \sigma'^{-1}(y)]$ is connected. Moreover, as  $\delta(Q)\geq 3$, we
 know that for each $x \in V(Q)$, there exists $z\in \sigma^{-1}(x)$
 such that $z$ has edge degree at least $3$. In particular we know
 that $z$ is different from $v$. So we have that $G[\sigma'^{-1}(x)]$
 is a non-empty graph. Thus $Q \leq G'$. The lemma follows by
 iterating this argument. \end{proof}

\begin{definition}[The function {\bf bcg}]
Given a graph $G$, we define ${\bf bcg}(G)$ as the maximum $k$ for which $G$ can be contracted to the uniformly triangulated grid $\Gamma_{k}$. 
\end{definition}
Notice that ${\bf bcg}$ is a contraction-closed parameter, i.e., if $H\leq G$, then ${\bf bcg}(H)
\leq {\bf bcg}(G)$.

\begin{lemma}
\label{fasdfsdfsd}
Let $H$ and $G$ be two graphs. If $H$ is a dissolution of $G$, then ${\bf bcg}(H)={\bf bcg}(G)$.
\end{lemma}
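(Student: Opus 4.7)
The plan is to prove the equality $\bcg(H)=\bcg(G)$ by establishing the two inequalities separately.

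For the direction $\bcg(H)\leq \bcg(G)$, I would first observe that a single dissolution of a degree-2 vertex $v$ with neighbors $x,y$ can be realized as the $1$-size contraction that identifies $v$ with $x$ (which produces the edge $\{x,y\}$, possibly as a multi-edge). Iterating, $H$ is a contraction of $G$. Therefore, if $\Gamma_k\leq H$, then by transitivity of the contraction relation, $\Gamma_k\leq G$. Taking the maximum over such $k$ yields $\bcg(H)\leq \bcg(G)$.

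For the direction $\bcg(G)\leq \bcg(H)$, I would apply \autoref{asfsvcxavcsdvvsd} with $Q=\Gamma_k$. The nontrivial step is to verify that $\delta(\Gamma_k)\geq 3$ for every $k\geq 3$. This follows from a direct inspection of \autoref{grids}: every interior vertex of the underlying grid has degree $6$ (four grid neighbors and two triangulation diagonals); every non-corner boundary vertex has degree at least $4$; the corner $(0,0)$ has degree exactly $3$ since it is incident to $(1,0)$, $(0,1)$, and the hub $(k-1,k-1)$; the corners $(k-1,0)$ and $(0,k-1)$ each gain a diagonal neighbor plus the edge to the hub; and $(k-1,k-1)$ is the hub. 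Hence $\delta(\Gamma_k)\geq 3$. Now if $\Gamma_k\leq G$, \autoref{asfsvcxavcsdvvsd} (applied iteratively along the sequence of dissolutions producing $H$ from $G$) gives $\Gamma_k\leq H$, so $\bcg(G)\leq \bcg(H)$.

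Combining the two inequalities yields $\bcg(G)=\bcg(H)$, which is the claim. The only point that requires genuine care is the minimum-degree verification for $\Gamma_k$; everything else reduces to \autoref{asfsvcxavcsdvvsd} and the fact that dissolution is a special form of contraction.
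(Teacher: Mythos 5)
Your proof is correct and follows essentially the same route as the paper: the inequality $\bcg(H)\leq\bcg(G)$ via the contraction-closedness of $\bcg$ (since a dissolution is a contraction), and $\bcg(G)\leq\bcg(H)$ via an iterated application of \autoref{asfsvcxavcsdvvsd} using $\delta(\Gamma_k)\geq 3$. The only difference is that you spell out the degree check for $\Gamma_k$, which the paper takes for granted; your verification is accurate.
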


\begin{proof}
The fact that ${\bf bcg}(H)\leq {\bf bcg}(G)$ follows from the fact that $H$ is also a contraction 
of $G$ and taking into account the contraction-closedness of ${\bf bcg}$.
The fact that ${\bf bcg}(G)\leq {\bf bcg}(H)$ follows by taking into account that $\delta(\Gamma_{k})\geq 3$
and applying inductively \autoref{asfsvcxavcsdvvsd} to the vertices of degee 2 in $G$ that need to be dissolved in order to transform $G$ to $H$.
\end{proof}

\begin{definition}\label{fifdiss}
Given a graph class ${\cal G}$, we define the {\em dissolution closure} of ${\cal G}$ as the graph class ${\sf diss}({\cal G})$  containing 
all the dissolutions of the graphs in ${\cal G}$. 
\end{definition}

We observe the following.
\begin{lemma}
\label{asdfdfsdgfsgfdg}
If  ${\cal G}\in\sqgc(c)$ for some $1\leq c<2$, then  ${\sf diss}({\cal G})\in\sqgc(c)$.
\end{lemma}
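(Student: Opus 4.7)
The plan is to combine the preceding two observations: Lemma~\ref{fasdfsdfsd} tells us that dissolutions preserve the parameter ${\bf bcg}$, and Lemma~\ref{lemma:twcont} (or, equivalently, the fact that a dissolution is a particular kind of contraction) tells us that dissolutions do not increase treewidth. Together these two facts immediately push the SQG\textbf{C} bound from ${\cal G}$ down to ${\sf diss}({\cal G})$.

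More concretely, I would fix a graph $H \in {\sf diss}({\cal G})$ and an integer $t$ such that $H$ excludes $\Gamma_t$ as a contraction, i.e., ${\bf bcg}(H) \leq t-1$. By the definition of ${\sf diss}({\cal G})$, there exists $G \in {\cal G}$ of which $H$ is a dissolution. First I would invoke Lemma~\ref{fasdfsdfsd} to get ${\bf bcg}(G) = {\bf bcg}(H) \leq t - 1$, which means $G$ itself excludes $\Gamma_t$ as a contraction. Next, since ${\cal G} \in \sqgc(c)$, this yields $\tw(G) = \Ocal(t^c)$. Finally, since every dissolution of a degree-two vertex can be realised as the contraction of one of its two incident edges (a $1$-size contraction), $H$ is a contraction of $G$, and by the contraction-closedness of treewidth (a direct consequence of Lemma~\ref{lemma:twcont} with $c=0$, or equally well a standard fact), $\tw(H) \leq \tw(G) = \Ocal(t^c)$.

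There is essentially no obstacle here: the whole content of the lemma is packaged into Lemma~\ref{fasdfsdfsd}, whose nontrivial direction relied on Lemma~\ref{asfsvcxavcsdvvsd} and the fact that $\delta(\Gamma_k) \geq 3$. The only point worth writing out carefully is the observation that dissolving a degree-two vertex does not increase treewidth, which is immediate either because the dissolved graph is a minor of the original or by a one-line tree-decomposition argument (keep the same decomposition and, if necessary, drop the dissolved vertex from every bag). Once these pieces are aligned, the claim $\tw(H) = \Ocal(t^c)$ whenever $H \in {\sf diss}({\cal G})$ and ${\bf bcg}(H) \leq t-1$ is exactly the statement ${\sf diss}({\cal G}) \in \sqgc(c)$, completing the proof.
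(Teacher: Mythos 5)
Your proposal is correct and follows essentially the same route as the paper's own proof: apply Lemma~\ref{fasdfsdfsd} to transfer ${\bf bcg}$ from $H$ to $G$, invoke the SQG{\bf C} bound on $G$, and conclude via the fact that $H$, being a dissolution of $G$, is a minor (equivalently, a contraction) of $G$ and hence has no larger treewidth. The only cosmetic difference is that you phrase the hypothesis via excluding $\Gamma_t$ as a contraction while the paper works directly with the inequality $\tw(G)\leq\lambda\cdot({\bf bcg}(G))^c$; these are the same thing.
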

\begin{proof}
Suppose that ${\cal G}\in\sqgc(c)$ for some $1\leq c<2$, wich 
implies that 
\begin{eqnarray}
\forall G \in \Gcal\   \tw(G)  \leq  \lambda\cdot  ({\bf bcg}(G))^c. \label{asdfsdf}
\end{eqnarray}
Let $H\in {\sf diss}({\cal G})$ and let $G\in {\cal G}$ such that  $H$ is a dissolution of  $G$.
By \autoref{fasdfsdfsd}, ${\bf bcg}(H)={\bf bcg}(G)$ and from~\eqref{asdfsdf},  $\tw(G)\leq \lambda ({\bf bcg}(H))^c$.  As $H$ is a minor of $G$, we have that $\tw(H)\leq \lambda ({\bf bcg}(H))^c$ and the lemma follows. 
\end{proof}

The next lemma uses as a departure point the same idea as the one of proof of \autoref{ui3s7fg}, visualized by the example of \autoref{asfgsdgasdfasdfsdfsadfsdf}.

\begin{lemma}
\label{kil45io}
If ${\cal G}$ is a graph class that is topological minor closed, then  ${\sf inter}_{d}({\cal G})\subseteq {\cal G}^{(d+1,d-1)}$.
\end{lemma}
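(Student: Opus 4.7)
The plan is to produce one graph $J$ that simultaneously witnesses both required contractions. Write $k_{v}:=|\{C\in\mathcal{C}\mid v\in C\}|$, $V_{I}:=\{v\in V(G)\mid k_{v}\geq 2\}$, and $V_{I}^{C}:=V_{I}\cap C$. Expanding the edge-degree bound gives $\sum_{v\in V_{I}^{C}}(k_{v}-1)\leq d$ for every $C$, so $|V_{I}^{C}|\leq d$ and $\max_{v} k_{v}\leq d+1$. Since $\mathcal{G}$ is closed under topological minors, for each $C$ I may replace $G[C]$ by a suitably reduced spanning structure: fix a spanning tree of $G[C]$, then iteratively trim non-intersection leaves and dissolve non-intersection degree-$2$ vertices, to obtain a tree $T_{C}$ whose leaves lie in $V_{I}^{C}$ and whose remaining non-intersection vertices (\emph{branch points}) all have degree at least $3$. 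The union $G^{*}:=\bigcup_{C}T_{C}$ is a topological minor of $G$ and so belongs to $\mathcal{G}$.

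The key technical estimate is $\operatorname{diam}(T_{C})\leq |V_{I}^{C}|-1\leq d-1$. Let $P$ be a longest path in the tree $T_{C}$, of length $L$; both endpoints of $P$ are leaves of $T_{C}$ and hence lie in $V_{I}^{C}$. Each of the $L-1$ internal vertices of $P$ either already lies in $V_{I}^{C}$, or is a branch point, in which case (by having degree $\geq 3$) it carries at least one off-path subtree whose leaves lie again in $V_{I}^{C}$. Since the off-path subtrees rooted at distinct internal vertices of $P$ are vertex-disjoint, combining these contributions gives at least $L+1$ distinct elements of $V_{I}^{C}$, so $L\leq |V_{I}^{C}|-1$.

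Now I build $J$ by splitting each intersection vertex into its $k_{v}$ copies. Let $V(J):=\{(v,C)\mid C\in\mathcal{C},\ v\in V(T_{C})\}$ (so a branch point $v\in V(T_{C})\setminus V_{I}$ contributes a single vertex, while an intersection vertex $v\in V_{I}$ contributes one vertex per $C$ containing it). Put two kinds of edges: a type-\textbf{(a)} edge $\{(u,C),(v,C)\}$ for every $\{u,v\}\in E(T_{C})$ (re-creating each $T_{C}$ inside its own $C$-slot), and a type-\textbf{(b)} edge $\{(v,C),(v,C')\}$ for every $v\in V_{I}$ and every pair of distinct $C,C'\in\mathcal{C}$ with $v\in C\cap C'$.

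Finally, I verify both contractions on this single $J$. Partitioning $V(J)$ by $\Pi_{v}:=\{(v,C)\mid C\in\mathcal{C},\ v\in V(T_{C})\}$ yields parts of size $k_{v}\leq d+1$ if $v\in V_{I}$ and of size $1$ if $v$ is a branch point; type-(a) edges collapse to $E(G^{*})$ and type-(b) edges become loops, so the quotient is $G^{*}$, giving $G^{*}\leq^{(d+1)}J$. Partitioning instead by $Q_{C}:=\{(v,C)\mid v\in V(T_{C})\}$, each part induces an isomorphic copy of $T_{C}$ and is therefore connected with diameter at most $d-1$ by the previous paragraph; type-(a) edges now become loops while type-(b) edges contribute precisely $|C\cap C'|$ parallel edges between the contracted vertices (since $C\cap C'\subseteq V_{I}$), so the quotient is $H$ and $H\leq^{d-1}J$, proving $H\in\mathcal{G}^{(d+1,d-1)}$. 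The main obstacle is the diameter bound: without the degree-at-least-$3$ property enforced by dissolution, the generic Steiner-tree estimate gives only $2|V_{I}^{C}|-3$, which is too weak by a factor of two.
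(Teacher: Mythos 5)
Your proof is correct and follows essentially the same route as the paper's: reduce each $G[C]$ to a tree whose leaves are intersection vertices, dissolve the non-intersection degree-$2$ vertices to force diameter at most $|V_I^C|-1\leq d-1$, and build $J$ as the disjoint union of these trees with cliques joining the copies of each shared vertex, so that contracting the cliques yields a topological minor of $G$ while contracting the trees yields $H$. The only difference is cosmetic: you spell out the diameter estimate and the edge-multiplicity bookkeeping that the paper leaves as ``easy to see.''
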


\begin{proof}
 Let $H$ be a graph on $h$ vertices in  ${\sf inter}_{d}({\cal G}),$ for some $d\in \Bbb{N}$.
This means that there is a graph $G$ in ${\cal G}$ such that 
we can see the vertices of $H$ as a set ${\cal C}=\{C_{1},\ldots,C_{h}\}$ of connected subsets of $G$ and each multi-edge $e=\{C_{i},C_{j}\}$ of $H$ corresponds
to two mutually intersecting subsets of ${\cal C}$ and  the multiplicity 
of $e$ is $|C_{i}\cap C_{j}|$.
For every $\{i,j\}\in{h\choose 2}$, we set $V_{i,j}=C_{i}\cap C_{j}$, $m_{i,j}=|C_{i}\cap C_{j}|$, and we assume  that $e_{i,j}=\{C_{i},C_{j}\}$ is a multi-edge of $H$ 
of multiplicity $m_{i,j}$ (if this edge does not exists in $H$, then the multiplicity of $e_{i,j}$ is 0).

We  define  $V_{i}=\bigcup_{j\in [h]}V_{i,j}$, 
for every $i\in[h]$ and also set ${V}=\bigcup_{i\in[h]} V_{i}$.
Notice that, for each $i \in [h]$, $|V_{i}|$ is upper bounded by the edge-degree, in $H$, of the vertex $C_{i}$, therefore,
$|V_{i}|\leq d$ for each $i\in[h]$. Also, a vertex in ${V}$ cannot belong in more 
that $d+1$ distinct $C_{i}$’s as, otherwise $H$ would contain a clique with at least $d+2$ vertices.
As $H \in {\sf inter}_{d}({\cal G})$, this is not possible.

Recall that, for each $i \in [h]$, $V_{i}$ is a subset of $C_{i}$ and let $T_{i}$ be a minimum-size tree of 
$G[C_{i}]$ containing the vertices of $V_{i}$.
We partition the set of vertices of $T_{i}$ into three sets  $V_{i},\overline{V}_{i},D_{i}$
where among the vertices in $V(T_i)\setminus V_{i}$, $D_{i}$ are the vertices of degree 2 and $\overline{V}_{i}$ are the rest. By minimality, the leaves of  $T_{i}$ belong in $V_{i}$. Moreover,  there is no vertex in $\overline{V}_{i}$ that belongs to some other $\overline{V}_{i’}$, $i \in [h] \setminus  \{i\}$.
 We denote by $\hat{T}_{i}$ the tree obtained from $T_{i}$ if we dissolve in 
$T_{i}$ all vertices of $D_{i}$. That way we can still partition the vertices of each $\hat{T}_i$, $i \in [h]$,
into  $V_{i}$ and $\overline{V}_{i}$.
Also, it is easy to see that $\hat{T}_{i}$ has diameter at most $|V_{i}|-1\leq d-1$. 

We define the graph  $G’:=\bigcup_{i\in[h]}\hat{T}_{i}$.
Notice that $G'$  is obtained from $\bigcup_{i\in[h]} T_{i}$ (that is a subgraph of $G$) after we dissolve all vertices in $\bigcup_{i\in[h]}D_{i}$.
Therefore $G'$ is a topological minor of $G$, thus $G'\in{\cal G}$.
We consider the collection ${\cal T}=\{\hat{T}_{1},\ldots,\hat{T}_{h}\}$
of connected subgraphs of $G'$.

We define the graph $J$ to be the disjoint union of the $h$  trees in ${\cal T}$ in which, for each $x \in {V}$, we add a clique between all the copies of $x$.
Notice that each added clique has size at least $2$ and at most $d+1$.

Observe now that $G'\leq^{(d+1)} J$, as $G'$ is obtained after contracting in $J$ the aforementioned pairwise disjoint cliques. Moreover, $H\leq^{d-1} J$ as $H$ is obtained 
after we contract in $J$ each $\hat{T}_{i}$ (of diameter $\leq d-1$) to a single vertex. 
As $G'\in {\cal G}$, we conclude that $H\in {\cal G}^{(d+1,d-1)}$ as required.
\end{proof}

We are now ready to prove \autoref{this_oklok}.

\begin{proof}[Proof of \autoref{this_oklok}]
Let ${\cal G}$ be a monotone graph class in $\sqgc(c)$ for some $1\leq c<2$
and let ${\cal D}={\sf diss}({\cal G})$.
From \autoref{asdfdfsdgfsgfdg}, ${\cal D}\in\sqgc(c)$
and by the monotonicity of ${\cal G}$, we have that ${\sf diss}({\cal G})$ is closed under taking of topological minors. Therefore, from \autoref{kil45io},
 ${\sf inter}_{d}({\cal D})\subseteq {\cal D}^{(d+1,d-1)}$ and from \autoref{j9io3nj78}, ${\sf inter}_{d}({\cal D})\in \sqgc(c)$. The result follows because ${\cal G}\subseteq {\cal D}$, as  this implies that 
 ${\sf inter}_{d}({\cal G})\subseteq {\sf inter}_{d}({\cal D})$.
\end{proof}

\section{Proof of \autoref{j9io3nj78}}
\label{mainipd}

Let $H$ and $G$ be graphs and $c$ be a non-negative integer. 
If  $H\leq_{\sigma}^{c}G$, then we say that $H$ is a {\em $\sigma$-contraction} of $G$, and denote this by $H\leq_\sigma  G$.

 Before we proceed 
the the proof  of \autoref{j9io3nj78} we make first the following three observations.
 (In all statements, we  assume that $G$ and $H$ are two graphs and $\sigma: V(G) \to V(H)$ such that $H$ is a $\sigma$-contraction of $G$.)

\begin{observation}
\label{obs:1}
Let $S$ be a connected subset of $V(H)$.
Then the set $\bigcup_{x\in S}\sigma^{-1}(x)$ is connected in $G$.
\end{observation}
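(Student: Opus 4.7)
The plan is to show connectivity vertex-by-vertex: pick any two vertices $u,v \in \bigcup_{x\in S}\sigma^{-1}(x)$ and exhibit a walk between them that stays inside this union. First I would let $x_u,x_v \in S$ be the (unique) vertices with $u \in \sigma^{-1}(x_u)$ and $v \in \sigma^{-1}(x_v)$, which exist since $\sigma$ is defined on all of $V(G)$ and $\sigma^{-1}$ partitions $V(G)$.

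Since $S$ is connected in $H$, I would then fix a path $x_u = x_0, x_1, \ldots, x_k = x_v$ in $H[S]$. For each $i \in [0,k-1]$, the edge $\{x_i,x_{i+1}\} \in E(H)$, so by the second clause of \autoref{allmincon} the graph $G[\sigma^{-1}(x_i)\cup \sigma^{-1}(x_{i+1})]$ is connected. In particular, for every $i \in [0,k-1]$ one can pick any vertex $u_i \in \sigma^{-1}(x_i)$ and any vertex $w_i \in \sigma^{-1}(x_{i+1})$ and find a $u_i$--$w_i$ path inside $G[\sigma^{-1}(x_i)\cup \sigma^{-1}(x_{i+1})]$.

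Next I would glue these pieces together. Setting $u_0 := u$ and $w_{k-1}$ to be any vertex of $\sigma^{-1}(x_k)$ on such a path (and choosing intermediate $u_i,w_{i-1}\in \sigma^{-1}(x_i)$), I link $w_{i-1}$ to $u_i$ for $i\in[1,k-1]$ using that $G[\sigma^{-1}(x_i)]$ is non-empty and of diameter at most $c$, hence in particular connected (this is the first clause of \autoref{allmincon}). An analogous step inside $G[\sigma^{-1}(x_k)]$ extends the walk from $w_{k-1}$ to $v$. Concatenating these sub-paths yields a $u$--$v$ walk whose every edge lies inside some $G[\sigma^{-1}(x_i)]$ or some $G[\sigma^{-1}(x_i)\cup \sigma^{-1}(x_{i+1})]$, and therefore entirely within $G\bigl[\bigcup_{x\in S}\sigma^{-1}(x)\bigr]$.

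I do not expect any real obstacle here: the statement is essentially a bookkeeping consequence of the two defining clauses of $\leq_\sigma^c$ (non-empty connected fibres, and connectedness of the union of fibres across every edge of $H$). The only care needed is to ensure the walk never leaves $\bigcup_{x\in S}\sigma^{-1}(x)$, which is automatic because every sub-path we use lives either in a single fibre $\sigma^{-1}(x_i)$ with $x_i \in S$, or in the union of two consecutive such fibres.
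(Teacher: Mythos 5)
Your proof is correct and is exactly the routine argument the paper has in mind: the statement appears only as an unproved observation, and the natural justification is precisely your concatenation of paths through consecutive fibres, using that each fibre is connected (finite diameter) and that the union of fibres over an edge of $H$ is connected. No issues.
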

\begin{observation}
\label{obs:2}
Let $S_{1}\subseteq S_{2}\subseteq V(H)$. Then 
$\sigma^{-1}(S_{1})\subseteq \sigma^{-1}(S_{2})\subseteq V(G)$.
\end{observation}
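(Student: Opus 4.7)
The plan is to prove this by a direct element-chase, since the statement is really a general fact about preimages under any function (here $\sigma$ being a surjection witnessing the contraction $H\leq_\sigma^c G$ plays no special role). First I would unfold the definition: $\sigma^{-1}(S_i) = \{v \in V(G) \mid \sigma(v) \in S_i\}$ for $i \in \{1,2\}$. The containment $\sigma^{-1}(S_2) \subseteq V(G)$ is immediate from this definition.

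For the non-trivial inclusion $\sigma^{-1}(S_1) \subseteq \sigma^{-1}(S_2)$, I would take an arbitrary $v \in \sigma^{-1}(S_1)$. By definition of the preimage, $\sigma(v) \in S_1$. Since by hypothesis $S_1 \subseteq S_2$, we conclude $\sigma(v) \in S_2$, and therefore $v \in \sigma^{-1}(S_2)$, as required.

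Since the statement is monotonicity of preimage with respect to set inclusion, there is essentially no obstacle; the only mild subtlety worth flagging is that $\sigma$ is not required to be injective here (and indeed it is typically not, as fibers $\sigma^{-1}(x)$ can be arbitrarily large connected subgraphs of $G$), but monotonicity of the preimage operator holds for arbitrary maps regardless of injectivity or surjectivity. Hence the full argument is a one-line element chase, and I would present it as such in the paper.
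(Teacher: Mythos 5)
Your proof is correct: the paper states this as an observation without proof precisely because it is the standard monotonicity of preimages under an arbitrary map, and your element chase ($v \in \sigma^{-1}(S_1) \Rightarrow \sigma(v) \in S_1 \subseteq S_2 \Rightarrow v \in \sigma^{-1}(S_2)$) is exactly the intended justification. Your remark that neither the surjectivity of $\sigma$ nor the contraction structure plays any role here is also accurate.
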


\begin{observation}
\label{obs:3}
Let $S$ be a connected subset of $V(G)$.
Then the diameter of $\sigma(S)$ in $H$ is at most the diameter of $S$ in $G$.
\end{observation}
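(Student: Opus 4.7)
The plan is to establish the inequality pointwise: for each pair $x,y \in \sigma(S)$, I will exhibit a walk in $H[\sigma(S)]$ connecting $x$ to $y$ whose vertex count is at most the distance in $G[S]$ between suitable preimages of $x$ and $y$. To begin, pick $u \in \sigma^{-1}(x) \cap S$ and $v \in \sigma^{-1}(y) \cap S$, which exist since $x,y \in \sigma(S)$, and take a shortest $u$-$v$ path $P = u_{0}u_{1}\cdots u_{k}$ lying inside $G[S]$ (it exists because $S$ is connected). The image sequence $\sigma(u_{0}), \sigma(u_{1}),\ldots, \sigma(u_{k})$ then consists entirely of vertices of $\sigma(S)$.

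The key step is to verify that any two consecutive entries $\sigma(u_{i}), \sigma(u_{i+1})$ are either equal or adjacent in $H$. This is exactly where the definition of $\sigma$-contraction does the work: if $\sigma(u_{i}) \neq \sigma(u_{i+1})$, the induced subgraph $G[\sigma^{-1}(\sigma(u_{i})) \cup \sigma^{-1}(\sigma(u_{i+1}))]$ is connected, because both branch sets are connected by the first clause of \autoref{allmincon} and they are joined by the edge $u_{i}u_{i+1}$; the second clause of \autoref{allmincon} then yields $\{\sigma(u_{i}), \sigma(u_{i+1})\} \in E(H)$. Since both endpoints lie in $\sigma(S)$, this is an edge of $H[\sigma(S)]$.

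Collapsing the sequence $\sigma(u_{0}),\ldots,\sigma(u_{k})$ by deleting successive repetitions therefore produces a genuine walk (and hence a path) from $x$ to $y$ inside $H[\sigma(S)]$ containing at most $k+1$ vertices, which gives ${\bf dist}_{H[\sigma(S)]}(x,y) \leq {\bf dist}_{G[S]}(u,v) \leq \mathrm{diam}(G[S])$. Maximising over $x, y \in \sigma(S)$ yields $\mathrm{diam}(H[\sigma(S)]) \leq \mathrm{diam}(G[S])$, which is the claimed inequality. There is no real obstacle; the only subtlety is the paper's convention that distance counts vertices along a path, so that collapsing repeated images contributes nothing to the length, and this is handled transparently by the deletion-of-repetitions argument.
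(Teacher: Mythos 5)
Your proof is correct and is exactly the standard argument one would supply: the paper states this as an observation without giving any proof, and your pointwise argument (the $\sigma$-image of consecutive vertices on a path is either a single vertex or an edge of $H$ by the second clause of \autoref{allmincon}, so image sequences of paths collapse to walks of no greater vertex count) is the intended one. The only cosmetic point is that the collapsed sequence is a walk rather than a path, but since a walk between two vertices always contains a path on at most as many vertices, your distance bound stands.
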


Given a graph $G$ and $S_{1},S_{2}\subseteq V(G)$ we say that $S_{1}$ and $S_{2}$ \emph{touch} if either $S_{1}\cap S_{2}\neq\emptyset$
or there is an edge of $G$ with one endpoint in $S_{1}$ and the other in $S_{2}$.

We say that a collection ${\cal R}$ of paths of a graph is  {\em internally disjoint}
if none of the internal vertices, i.e., none of the vertex of degree $2$,
 of some path in ${\cal R}$  is a vertex of some other path in ${\cal R}$. 
Let ${\cal A}$ be a collection of subsets of $V(G)$. We say that ${\cal A}$ 
is a {\em connected packing} of $G$ if its elements are connected and pairwise disjoint.
If additionally ${\cal A}$ is a partition of $V(G)$, then we say that 
${\cal A}$ is a {\em connected partition} of $G$ and if, additionally, all its elements have diameter bounded 
by some integer  $c$, then we say that ${\cal A}$
is a {\em $c$-diameter partition} of $G$.

\subsection{$\Lambda$-state configurations.} 

\begin{definition}($\Lambda$-state configurations)
{Let $G$ be a graph.}
Let $\Lambda=({\cal W},{\cal E})$ be a graph whose vertex 
set is a connected packing of $G$, i.e., its vertices are connected subsets of $V(G)$.
A {\em $\Lambda$-state configuration}  of a graph $G$ is a quadruple  ${\cal S}=({\cal X},\alpha,{\cal R},\beta)$ where 
\begin{enumerate}
\item ${\cal X}$ is a  connected packing of $G$, 
\item $\alpha$ is a bijection from ${\cal W}$ to 
${\cal X}$ such that for every $W\in {\cal W}$, $W\subseteq \alpha(W)$,
\item ${\cal R}$ is a collection of internally disjoint paths of $G$, and 
\item $\beta$ is a bijection from ${\cal E}$ to ${\cal R}$ such that 
if $\{W_{1},W_{2}\}\in {\cal E}$ then the endpoints of $\beta(\{W_{1},W_{2}\})$ are in $W_{1}$ and $W_{2}$
and $V(\beta(\{W_{1},W_{2}\}))\subseteq \alpha(W_{1})\cup \alpha(W_{2})$.
\end{enumerate}

\end{definition}

\begin{definition}(States, freeways, clouds, and coverage)
A $\Lambda$-state configuration ${\cal S}=({\cal X},\alpha,{\cal R},\beta)$ of $G$ is  {\em complete} if ${\cal X}$ is a partition of $V(G)$.
We refer to the elements of ${\cal X}$ as the {\em states} of ${\cal S}$ and to the elements 
of ${\cal R}$ as the {\em freeways}  of ${\cal S}$.
We define ${\sf indep}({\cal S}) = V(G) \sm \bigcup_{X \in \cal X} X.$ Note that if $\cal S$ is a $\Lambda$-state configuration of $G$, $\cal S$ is complete if and only if ${\sf indep}({\cal S}) = \es$.

Let ${\cal A}$ be a $c$-diameter partition of $G$.  We refer to the sets of ${\cal A}$  as the 
{\em  ${\cal A}$-clouds} of $G$. 
We define ${\sf front}_{\cal A}({\cal S})$ as the set of all ${\cal A}$-clouds of $G$ that are not subsets of some 
$X\in\Xcal$.
Given a ${\cal A}$-cloud $C$ and a state $X$ of ${\cal S}$ we say that $C$ {\em shadows} $X$ 
if $C\cap X\neq \emptyset$. 
The {\em coverage} ${\sf cov}_{\cal S}(C)$ of an ${\cal A}$-cloud $C$ of $G$ is the number of states of ${\cal S}$ that are shadowed by $C$. 
A $\Lambda$-state configuration ${\cal S}=({\cal X},\alpha,{\cal R},\beta)$ of  $G$ is {\em ${\cal A}$-normal} if its satisfies the following conditions:
\begin{itemize}
\item[(A)] If a  ${\cal A}$-cloud $C$ intersects some $W\in {\cal W}$, then $C\subseteq \alpha(W)$.
\item[(B)] If a ${\cal A}$-cloud over ${\cal S}$ intersects the vertex set of at least 
two freeways of  ${\cal S}$, then it shadows at most one state of  ${\cal  S}$.
\end{itemize}
We define ${\sf cost}_{\cal A}({\cal S}) = \sum_{C \in {\sf front}_{\cal A}({\cal S})} {\sf cov}_{\cal S}(C).$
Given  $S_{1}\subseteq S_{2}\subseteq V(G)$ where $S_{1}$ is connected,
we define ${\sf cc}_{G}(S_{2},S_{1})$ as the (unique) connected component 
of $G[S_{2}]$ that contains $S_{1}$.

\end{definition}

\subsection{Triangulated grids inside triangulated grids}

The next lemma is the main combinatorial engine of our results.
We assume that $H\leq^{c} G$ and  $\Gamma_{k}\leq G$. Here $H$ should be seen as the result of a ``shrink'' of $G$ in the sense 
that $G$ can be contracted to $H$ so that each vertex of $H$ is created after a bounded number of contractions. The lemma states that if $G$ can be contracted to a uniformly triangulated grid, then   $H$,  as a ``shrunk version'' of $G$, can also be contracted to a uniformly triangulated grid that is no less than linearly smaller.

The proof strategy views the graph $G$ as being contracted into a uniformly triangulated grid $\Gamma_{k}$ (see \autoref{wergdsfgdfgdfgdf}), we choose a scattered set of
``capitals'' in it (the black vertices in \autoref{wergdsfgdfgdfgdf}). Then 
we set up a ``conquest'' procedure where each capital is trying to expand to a country. This procedure has three phases.
The first phase is the {\sl expansion face} where each country tries to incorporate unconquested territories around it (the limits of this expansion  is depicted by the red cycles in \autoref{wergdsfgdfgdfgdf}).
The second phase is the  {\sl clash face}, where different countries are fighting for disputed territories. Finally, the third phase is the {\sl annex phase}, where each country naturally incorporates remaining enclaves.
The end of this war creates a set of countries occupying the whole $G$ that, when contracted, give rise to a uniformly triangulated grid $\Gamma_{k'}$, where $k'=\Omega(k)$.

%
%

\begin{lemma}
\label{lemma_4}
Let $G$ and $H$ be graphs and $c,k$ be non-negative integers such that $H\leq^{c} G$ and $\Gamma_{k}\leq G$. 
Then $\Gamma_{k'}\leq H$ where $k'=\lfloor\frac{k-1}{2c+1}\rfloor-1$.
\end{lemma}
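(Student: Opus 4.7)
Let $\sigma:V(G)\to V(H)$ realize the $c$-diameter contraction $H\leq^c G$, so that ${\mathcal A}=\{\sigma^{-1}(z):z\in V(H)\}$ is a $c$-diameter partition of $V(G)$, and fix a surjection $\tau:V(G)\to V(\Gamma_k)$ realizing $\Gamma_k\leq G$, with connected bags $B_x:=\tau^{-1}(x)$. My plan is to construct in $G$ a partition $\{\Phi_{(a,b)}\}_{(a,b)\in V(\Gamma_{k'})}$ of $V(G)$ into connected sets such that~$(i)$ each $\Phi_{(a,b)}$ is a union of ${\mathcal A}$-classes, and~$(ii)$ $\{(a,b),(a',b')\}\in E(\Gamma_{k'})\iff G[\Phi_{(a,b)}\cup \Phi_{(a',b')}]$ is connected. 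Defining $\rho:V(H)\to V(\Gamma_{k'})$ by $\rho(z)=(a,b)$ whenever $\sigma^{-1}(z)\subseteq \Phi_{(a,b)}$ (well-defined by~$(i)$) then witnesses $\Gamma_{k'}\leq H$: each $\rho^{-1}(a,b)=\sigma(\Phi_{(a,b)})$ is connected in $H$ (the image of a connected set under a contraction is itself connected), and the fact that a $G$-edge joining two distinct ${\mathcal A}$-classes corresponds precisely to an $H$-edge between the two $\sigma$-images transports the biconditional of~$(ii)$ faithfully to the pair $(\rho^{-1}(a,b),\rho^{-1}(a',b'))$ in $H$.

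\textbf{The conquest construction.} In $\Gamma_k$, I select capitals $p_{(a,b)}:=((2c+1)a+c,(2c+1)b+c)$ for $(a,b)\in[0,k'-1]^2$; the $(2c+1)$-spacing is calibrated so that the $c$-balls $D_{(a,b)}\subseteq V(\Gamma_k)$ around the capitals are pairwise disjoint. The construction of $\{\Phi_{(a,b)}\}$ is then carried out in three phases, tracked by a $\Lambda$-state configuration ${\mathcal S}=({\mathcal X},\alpha,{\mathcal R},\beta)$ whose underlying graph $\Lambda$ is a copy of $\Gamma_{k'}$ indexed by the capitals. In the \emph{expansion phase}, each state $\alpha(W_{(a,b)})$ starts as the capital bag $B_{p_{(a,b)}}$ and grows to include every ${\mathcal A}$-class intersecting $\tau^{-1}(D_{(a,b)})$; the initial freeways in ${\mathcal R}$ are short $G$-paths realizing the $\Gamma_k$-edges between neighbouring $D_{(a,b)}$'s. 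In the \emph{clash phase}, I resolve the conflicts created by ${\mathcal A}$-classes whose $\tau$-image, of diameter $\le c$ in $\Gamma_k$ by~\autoref{obs:3}, straddles two expansion disks, by awarding each disputed class to one state via a fixed priority rule. In the \emph{annex phase}, the remaining ${\mathcal A}$-classes (those avoiding every $\tau^{-1}(D_{(a,b)})$) are absorbed as whole classes along shortest $G$-paths into the nearest state, completing the partition of $V(G)$ while preserving property~$(i)$ throughout.

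\textbf{Main obstacle.} The technical heart of the proof is to execute the three phases while maintaining the ${\mathcal A}$-normality invariants (A) and (B) of ${\mathcal S}$, and guaranteeing that the final configuration is complete in the sense that ${\sf indep}({\mathcal S})=\emptyset$. Invariant (B)---a cloud touching two freeways shadows at most one state---is precisely what prevents a reassignment needed to settle one clash from disconnecting another freeway or, worse, from creating a spurious $G$-adjacency between states $\Phi_{(a,b)},\Phi_{(a',b')}$ with $\{(a,b),(a',b')\}\notin E(\Gamma_{k'})$, which would violate the ``$\Leftarrow$'' direction of~$(ii)$. I therefore expect the clash phase to be the main obstacle, as it is the one that simultaneously jeopardises freeway integrity and the absence of spurious edges in the quotient. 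The central technical fact will be that the potential ${\sf cost}_{\mathcal A}({\mathcal S})$ strictly decreases through each reassignment in the clash and annex phases, which provides both a termination argument and the means to verify that (A), (B), and the connectedness of each state are preserved.

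\textbf{Parameter count and conclusion.} The value $k'=\lfloor (k-1)/(2c+1)\rfloor-1$ counts the number of $(2c+1)$-spaced capitals that fit inside $[0,k-1]^2$ with a margin of $c$ on each side for the expansion balls and one extra row and column of slack to accommodate the boundary of $\Gamma_k$. Once $\{\Phi_{(a,b)}\}$ satisfies~$(i)$ and~$(ii)$, the quotient map $\rho$ defined in the reduction witnesses $\Gamma_{k'}\leq H$, completing the proof.
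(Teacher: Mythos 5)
Your overall strategy is the same as the paper's: choose $(2c+1)$-spaced capitals in $\Gamma_k$, run an expansion/clash/annex ``conquest'' tracked by an ${\cal A}$-normal $\Lambda$-state configuration whose cost decreases, and observe that a complete configuration whose countries are unions of $\phi^{-1}$-classes descends to a contraction of $H$. Your reduction step (that property $(i)$ lets the partition of $V(G)$ be pushed forward to $V(H)$ while preserving the adjacency biconditional) is correct and matches the paper's final argument.

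There is, however, a genuine gap in how you treat the boundary of $\Gamma_k$. You take $\Lambda$ to be a copy of $\Gamma_{k'}$ indexed by the $k'^2$ interior capitals and you absorb the outer annulus of $\Gamma_k$ (everything outside the expansion disks) ``into the nearest state.'' Recall that $\Gamma_{k'}$ is not just the triangulated grid: by \autoref{grids} the corner $(k'-1,k'-1)$ is adjacent to \emph{every} vertex of the boundary ring $[0,k'-1]^2\setminus[1,k'-2]^2$. A locally-absorbed annulus makes consecutive boundary states touch (harmless, those edges already exist) but it cannot make the state of $(k'-1,k'-1)$ touch the state of, say, $(0,0)$; so the ``$\Rightarrow$'' direction of your condition $(ii)$ fails for all the long corner-to-boundary edges of $\Gamma_{k'}$, and your quotient is only the plain triangulated grid, from which $\Gamma_{k'}$ cannot be recovered by further contractions without shrinking the grid. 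The paper circumvents exactly this by taking $\Lambda\cong\hat{\Gamma}_{k'}$: all capitals on the outer layer are merged into a single element $b_{\rm out}$ whose capital/state is the entire outer region of $G$, this apex state becomes adjacent to every boundary state, and only at the very end is the edge $\{a,(k'-1,k'-1)\}$ of $\hat{\Gamma}_{k'}$ contracted to produce $\Gamma_{k'}$. Your construction can be repaired by adopting this device (equivalently, by assigning the whole connected outer annulus to the corner country), but as written the target of your partition is wrong. The remaining vagueness (the ``fixed priority rule'' in the clash phase, and the verification that reassignment preserves connectivity of the losing states and the invariants (A), (B)) is acknowledged by you as the main obstacle and is indeed where the paper spends Claim~\ref{d0oi3d}; I do not count that as a gap in a proposal, but the boundary issue is a missing idea, not a missing detail.
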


\begin{proof} Let $k^*=1+(2c+1)\cdot (k'+1)$ and observe that $k^{*}\leq k$, therefore
 $\Gamma_{k^{*}}\leq \Gamma_{k}\leq G$. For simplicity we use $\Gamma=\Gamma_{k^{*}}$.
Let $\phi: V(G)\rightarrow V(H)$ such that $H\leq^{c}_{\phi} G$
and let { $\sigma: V(G)\rightarrow V(\Gamma)$}
such that {$\Gamma\leq_{\sigma} G$.}
We define ${\cal A}=\{\phi^{-1}(a)\mid a\in V(H)\}$. Notice that ${\cal A}$ is a $c$-diameter partition of $G$.

%
For each $(i,j)\in\intv{0}{k'+1}^{2},$ we define $b_{i,j}$ to be the vertex of ${\Gamma}$  with coordinate $(i(2c+1),j(2c+1))$. 
We set $Q_{\rm in} = \{b_{i,j}\mid (i,j)\in\intv{1}{k'}^2\}$ and $Q_{\rm out}=\{b_{i,j}\mid (i,j)\in\intv{0}{k'+1}^2\}\setminus Q_{\rm in}$. Let also 
$Q=Q_{\rm in}\cup \{b_{\rm out}\}$ were $b_{\rm out}$ is a new element
that does not belong in $Q_{\rm in}$. Here $b_{\rm out}$ can be seen as a vertex that 
``represents'' all vertices in $Q_{\rm out}$.

Let $q,p$ be two different elements of $Q$.
We say that $q$ and $p$ are {\em linked} if they both belong in $Q_{\rm in}$ and 
their distance in ${\Gamma}$ is  $2c+1$ or one of them is $b_{\rm out}$ and 
the other 
is $b_{i,j}$ where $i\in\{1,k'\}$ or $j\in\{1,k'\}$. 
%

\begin{figure}[ht]
\begin{center}
\scalebox{.3}{\includegraphics{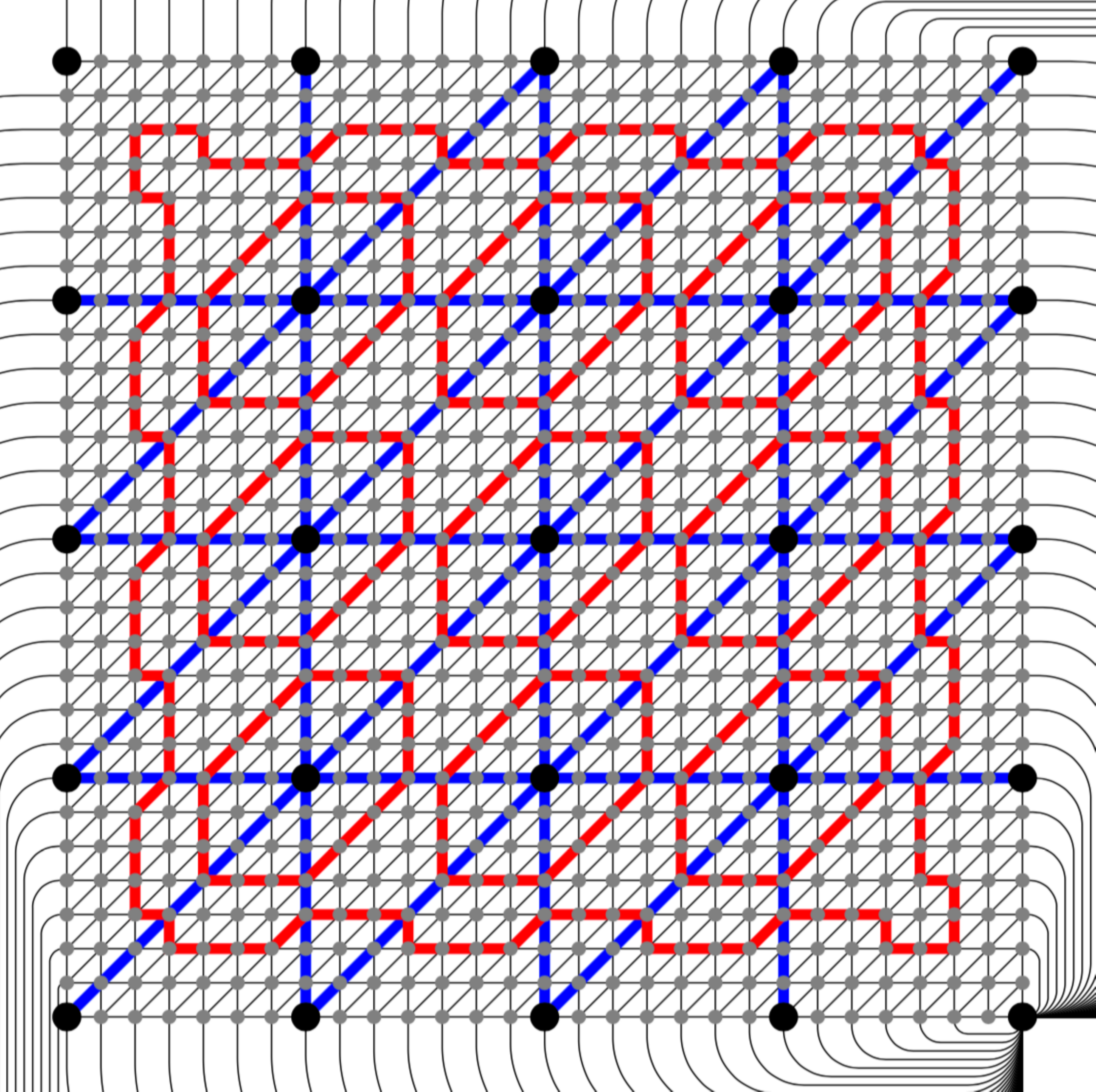}}
\end{center}
\caption{A visualization of the proof of \autoref{lemma_4}. In this whole graph $\Gamma_k$, we initialize our reaserch of $\hat{\Gamma}_{k'}$ such that every internal red hexagon will become a vertex of $\hat{\Gamma}_{k'}$ and correspond to a state and the border, also circle by a red line will become the vertex $b_{\rm out}$. The blue edges correspond to the freeways. Red  cycles correspond to the boundaries of the starting countries. Blue paths between big-black vertices are the freeways. Big-black vertices are the capitals.}
\vskip0cm
\label{wergdsfgdfgdfgdf}
\end{figure}
For each  $q\in Q_{\rm in}$, we define $W_{q}=\sigma^{-1}(q)$. 
$W_q$ is connected by the definition of  $\sigma$.
In case $q=b_{\rm out}$ we define $W_{q}=\bigcup_{q'\in Q_{\rm out}}\sigma^{-1}(q')$.
Note that as $Q_{\rm out}$ is a connected set of $\Gamma$, then, by Observation~\ref{obs:1}, $W_{b_{\rm out}}$ is connected in $G$.
We also define ${\cal W}=\{W_{q}\mid q\in Q\}$.
Given some $q\in Q$ we call $W_{q}$ the  {\em $q$-capital} of $G$ and 
a subset $S$ of $V(G)$ is a {\em capital} of $G$ 
if it is the { $q$-capital} for some $q\in Q$.
Notice that ${\cal W}$ is a connected packing of $V(G)$.

Let $q\in Q$. If $q\in Q_{\rm in}$ then we set $N_{q}=N^{c}_{{\Gamma}}(q)$. 
If 
$q=b_{\rm out}$, then we set $N_{q}=\bigcup_{q'\in  Q_{\rm out}} N^{c}_{{\Gamma}}(q')$.
Note that for every $q \in Q$, $N_q \subseteq V(\Gamma)$.
For every $q \in Q$, we define $X_q = \sigma^{-1}(N_q)$.
Note that $X_q \subseteq V(G)$.
We also set ${\cal X}=\{X_{q}\mid q\in Q\}$. 
Let $q$ and $p$ we two linked elements of $Q$.
If both $q$ and $p$ belong to $Q_{\rm in}$, and therefore are vertices of ${\Gamma}$,
then we define $Z_{p,q}$  as 
the unique shortest path between them in ${\Gamma}$. 
If $p=b_{\rm out}$ and $q\in Q_{\rm in}$, then we know that $q=b_{i,j}$ where  $i\in\{1,k'\}$ or $j\in\{1,k'\}$. 
In this case we define $Z_{p,q}$ as any shortest path in ${\Gamma}$
between $b_{i,j}$ and the vertices in $Q_{\rm out}$.
In both cases, we define $P_{p,q}$ 
by picking some path between $W_p$ and $W_q$  in $G[\sigma^{-1}(V(Z_{p,q}))]$
such that $|V(P_{p,q})\cap W_{q}|=1$ and $|V(P_{p,q})\cap W_{p}|=1$.

Let ${\cal E}=\{\{W_{p},W_{q}\}\mid \mbox{~$p$ and $q$ are linked}\}$
and let $\Lambda=({\cal W},{\cal E})$. Notice that $\Lambda$ 
is isomorphic to $\hat{\Gamma}_{k'}$ and consider the isomorphism 
that correspond each vertex $q=b_{i,j}$, $i,j \in \intv{1}{k'}^2$ to the vertex with coordinates $(i,j)$.
Moreover $b_{\rm out}$ corresponds to the apex vertex of $\hat{\Gamma}_{k'}$.

Let $\alpha: {\cal W}\rightarrow {\cal X}$ 
such that for every $q\in Q$, $\alpha(W_{q})=X_{q}$. Let 
also ${\cal R}=\{P_{p,q}\mid p,q \in Q, \mbox{~$p$ and $q$ are linked}\}$.
We define $\beta: {\cal E}\rightarrow {\cal R}$
such that if $q$ and $p$ are linked, then $\beta(W_{q},W_{p})=P_{p,q}$. We use notation 
${\cal S}=({\cal X},\alpha,{\cal R},\beta)$.

\begin{claim}
\label{claim_1}
${\cal S}$ is an ${\cal A}$-normal  $\Lambda$-state configuration of $G$. 
\end{claim}

\begin{myproof}[Proof of Claim~\ref{claim_1}.]
We first see that  $\cal S$ is a $\Lambda$-state configuration of $G$. 
Condition 1 follows by the definition of $X_{q}$ and Observation~\ref{obs:1}.
Condition 2 follows directly by the definitions of $W_{q}$ and $X_{q}$.
For Condition 3, we first observe that, by the construction of ${\Gamma}$ and 
the definition of $Z_{p,q}$, for any two pairs $p,q$ and $p',q'$ of pairwise 
linked elements of $Q$,  the paths $Z_{p,q}$ and $Z_{p',q'}$ are internally vertex disjoined
paths of ${\Gamma}$. 
It implies that $P_{p,q}$ and $P_{p',q'}$ can intersect each other only on the vertices of $W_p \cup W_q \cup W_{p'} \cup W_{q'}$.
But $P_{p,q}$ (resp. $P_{p',q'}$), by construction contains only two vertices of $W_p \cup W_q \cup W_{p'} \cup W_{q'}$ that are the extremities of $P_{p,q}$, (resp. $P_{p',q'}$).
So $P_{p,q}$ and $P_{p',q'}$ are internally vertex disjoined, as required. 
For Condition 4, assume that $\{W_{p},W_{q}\}\in{\cal E}$.
The fact that the endpoints of $\beta(\{W_{p},W_{q}\})$
are in $W_{p}$ and $W_{q}$  follows directly by the definition of $\beta(\{W_{p},W_{q}\})=P_{p,q}$.
It remains to prove that $V(\beta(\{W_{p},W_{q}\}))\subseteq \alpha(W_{p})\cup \alpha(W_{q})$
or equivalently, that  $V(P_{p,q})\subseteq X_{p}\cup X_{q}$. Observe that, if both $p,q\in Q_{\rm in}$, then
every vertex in the shortest path $Z_{p,q}$ should be within distance $c$ from either $p$ or $q$.
Similarly, if $p\in Q_{\rm in}$ and $q=b_{\rm out}$, then  every vertex 
in the shortest path $Z_{p,q}$ should be within distance $c$ from either $p$ or some vertex in $Q_{\rm out}$.
So for every $p,q \in Q$, with $p \not = q$, $Z_{p,q} \subseteq N_p \cup N_q$.
By Observation~\ref{obs:2}, every vertex in $\sigma^{-1}(V(Z_{p,q}))$ belongs 
 to $X_{p} \cup X_{q}$ and the required follows as $V(P_{p,q})\subseteq \sigma^{-1}(V(Z_{p,q}))$.
This completes the proof that ${\cal S}$ is a $\Lambda$-state configuration of $G$.

We now prove that ${\cal S}$ is ${\cal A}$-normal.
Recall that $\cal A$ be a c-diameter partition of $G$. 
Let $C$ be a $\cal A$-cloud and let $C' = \sigma(C)$ be a subset of $V({\Gamma})$. 
As $C$ is of diameter at most $c$, then, from Observation~\ref{obs:3}, $C'$
is also  of diameter at most $c$. Notice that if $C$ intersects some member $W$ of ${\cal W}$, then 
$C'= \sigma(C)$ also intersects  $\sigma(W)$, therefore $C'$ intersects  some element of 
$Q_{\rm in}\cup Q_{\rm out}$.
Assume $C'$ contains $p \in Q_{\rm in}\cup Q_{\rm out}$, then $C' \subseteq N_p$.
From Observation~\ref{obs:1},
$C\subseteq  X_{p}=\alpha(W_{p})$, therefore $C$ satisfies Condition (A).

%

By construction, the distance in ${\Gamma}$ between two elements of $Q_{\rm in}$  is  
either $2c+1$ or at least $4c+2$.
The distance in ${\Gamma}$ between on elements of $Q_{\rm in}$ and any element of $Q_{\rm out}$ is a multiple of $2c+1$.  
This implies that if $p,q\in Q$, $p \not = q$, $N_p \cap C' \not = \es$, and  $ N_q \cap C' \not = \es$, then $p$ and $q$ are linked. 
%

By construction, if $p$ and $q$ are linked, then for every $r \in Q$ and every $u \in Z_{p,q}$, ${\bf dist}_{\Gamma}(r,u) \geq \min ({\bf dist}_{\Gamma}(r,p), {\bf dist}_{G}(r,q))$, where for every $x\in Q_{\rm in}$, 
the quantity ${\bf dist}_{{\Gamma}}(x,b_{\rm out})$ is interpreted
 as $\min\{{\bf dist}_{\Gamma}(x,q')\mid {q' \in Q_{\rm out}}\}$. 
This implies that if $C'$ intersects $Z_{p,q}$ for some $p,q \in Q$, then for every $r\in Q\setminus\{p,q\}$, then
 $C'$ does not intersect $N_r$. We will use  this fact in the next paragraph towards completing the proof of Condition (B).



We now claim that if $C'$  intersects two distinct paths in 
$\{Z_{p,q}\mid (p,q)\in Q^{2}, p\neq q\}$, then $C'$ intersects 
at most one of the sets in $\{N_{q'}\mid q'\in Q\}$.
Let $Z_{p,q}$ and $Z_{p',q'}$
be two distinct paths intersected by $C'$. We argue first that 
$p,q,p',q'$ cannot be all different. Indeed, if this is the case, 
as $C'$ intersects $Z_{p,q}$ then $C'$ cannot intersect $N_{p'}$ or $N_{q'}$ as $p',q' \not \in \{p,q\}$.
As $Z_{p',q'} \subseteq N_{q'} \cup N_{p'}$, we have a contradiction.
Assume now that $p = p'$ and $q \not = q'$.
As $C'$ intersects $Z_{p,q}$, then it does not intersect $N_{r}$ for any $r \in Q \sm \{p,q\}$, and as it intersects $Z_{p,q'}$, then it does not intersect $N_r$ for any $r \in Q \sm \{p,q'\}$.
We obtain that $C'$ intersects at most one of the sets in $\{N_{r}\mid r\in Q\}$ that is $N_p$.
By definition of the states, we obtain that $C$ shadows at most one state that is $X_p$.
That completes the proof of condition (B). 
\end{myproof}

We define bellow three ways to transform a  $\Lambda$-state configuration of $G$. In each of them,
${\cal S}=({\cal X},\alpha,{\cal R},\beta)$ is an ${\cal A}$-normal  $\Lambda$-state configuration of $G$  and  $C$ is an  ${\cal A}$-cloud in ${\sf front}_{\cal A}({\cal S})$.

\begin{itemize}
\item[1.] The {\em expansion procedure}
applies when $C$ intersects at least two freeways of ${\cal S}$.
Let $X$ be  the  state of ${\cal S}$  shadowed by $C$ (this state is unique because 
of property (B) of {$\Acal$}-normality).  
We define $({\cal X}',\alpha',{\cal R}',\beta')={\sf expand}({\cal S},C)$ 
such that 
\begin{itemize}
\item ${\cal X}'={\cal X}\setminus \{X\}\cup \{X\cup C\}$, 
\item 
for each $W\in{\cal W}$, $\alpha'(W)=X'$ where $X'$ is the unique set of ${\cal X}'$ such that $W\subseteq X'$,
\item ${\cal R}'={\cal R}$, and  $\beta'=\beta$.
\end{itemize}


\item[2.]  The {\em  clash procedure} applies when $C$ intersects {\sl exactly} one freeway $P$ of ${\cal S}$.
Let $X_{1},X_{2}$ be the two  states of ${\cal S}$ that intersect this freeway.
 Notice that $P=\beta(\alpha^{-1}(X_{1}),\alpha^{-1}(X_{2}))$, as it is the only freeway with vertices in $X_1$ and $X_2$. 
Assume that  $(C \cap V(P)) \cap X_1 \not = \es$ 
(if, not, then swap the roles of $X_{1}$ and $X_{2}$). 
We define $({\cal X}',\alpha',{\cal R}',\beta')={\sf clash}({\cal S},C)$ as follows:
\begin{itemize}
\item ${\cal X}'=\{X_1\cup C\}\cup\bigcup_{X\in{\cal X}\setminus\{X_{1}\}}\{{\sf cc}_{G}(X\setminus C,\alpha^{-1}(X))\}$ (notice that $\alpha^{-1}(X)\subseteq X\setminus C$, for every $X\in{\cal X}$, because of property (A) of ${\cal A}$-normality), 
\item 
for each $W\in{\cal W}$, $\alpha'(W)=X'$ where $X'$ is the unique set of ${\cal X}'$ such that $W\subseteq X'$,
\item ${\cal R}'={\cal R}\setminus \{P\}\cup \{P'\}$, where $P'=P_1\cup P^{*}\cup P_{2}$ 
is defined as follows: let $s_{i}$ be the first vertex 
of $C$ that we meet while traversing $P$ when starting from its  endpoint that belongs in $W_{i}$
and  let  $P_{i}$ the subpath of $P$ that we traversed that way, for $i\in\{1,2\}$. We define $P^*$ by taking any path 
between $s_{1}$ and $s_{2}$ inside $G[C]$, and
\item $\beta'=\beta\setminus \{(\{W_{1},W_{2}\},P)\}\cup\{\{W_{1},W_{2}\},P'\}$.
\end{itemize}

\item[3:] The {\em annex procedure} applies when $C$ intersects  no freeway of ${\cal S}$
and {\sl touches} some country $X\in{\cal X}$.
We define $({\cal X}',\alpha',{\cal R}',\beta')={\sf anex}({\cal S},C)$
such that 
\begin{itemize}
\item ${\cal X}'=\{X_1\cup C\}\cup\bigcup_{X\in{\cal X}\setminus\{X_{1}\}}\{{\sf cc}_{G}(X\setminus C,\alpha^{-1}(X))\}$ (notice that $\alpha^{-1}(X)\subseteq X\setminus C$, for every $X\in{\cal X}$, because of property (A) of ${\cal A}$-normality), 
\item 
for each $W\in{\cal W}$, $\alpha'(W)=X'$ where $X'$ is the unique set of ${\cal X}'$ such that $W\subseteq X'$,
\item ${\cal R}'={\cal R}$, and  $\beta'=\beta$.
\end{itemize}\end{itemize}

\begin{claim}
\label{d0oi3d}
Let ${\cal S}=({\cal X},\alpha,{\cal R},\beta)$ be an ${\cal A}$-normal  $\Lambda$-state configuration of $G$,  and  $C\in{\sf front}_{\cal A}({\cal S})$.
Let ${\cal S}'={\sf action}({\cal S},C)$ where ${\sf action}\in\{{\sf expand}, {\sf clash}, {\sf anex}\}$.
Then ${\cal S}'$ is  an ${\cal A}$-normal  $\Lambda$-state configuration of $G$ where 
${\sf cost}({\cal S'},{\cal A}) \leq  {\sf cost}({\cal S},{\cal A})$.
Moreover,  if ${\sf cov}_{\cal S}(C)\geq 1$, then ${\sf cost}({\cal S'},{\cal A}) < {\sf cost}({\cal S},{\cal A})$
and if ${\sf cov}_{\cal S}(C)=0$ (which may be the case only when ${\sf action}={\sf anex}$), then
$|{\sf indep}({\cal S'})| < |{\sf indep}({\cal S})|$.
\end{claim}

\begin{myproof}[Proof of Claim~\ref{d0oi3d}.]

We first show that $\Scal'$ is an ${\cal A}$-normal  $\Lambda$-state configuration of $G$.
In each case, the construction of ${\cal S}'$ makes sure that ${\cal X'}$ is a connected packing of $G$
and that the countries are updated in a way that their capitals remain inside them.
Moreover, the highways are updated so to remain internally disjoint and inside the corresponding updated countries. 
We next prove that $\Scal'$ is $\Acal$-normal.
Condition (A) is invariant as the cloud we take into consideration cannot intersect any $W \in {\cal W}$ and a cloud intersecting some capital $W \in {\cal W}$ cannot be disconnected from $W$.
It now remains to prove condition (B).
Because of Condition 4 of the definition of a $\Lambda$-state configuration, if a cloud $C$ intersects a freeway, then it shadows at least one state.
Now assume that a cloud $C$ intersects two freeways in $\Scal'$, then by construction of $\Scal'$, it also intersects at least the two same freeways in $\Scal$. This along with the fact that  $\Scal$ satisfies Condition (B), implies that $\Scal'$ satisfies condition (B) as well, as required.

Notice that, for any cloud $C^*\in{\cal A}\setminus \{C\}$, 
if $C^*$ does not intersect a state $X$ in ${\cal S}$, then the corresponding 
state $X'$ in ${\cal S}'$, i.e., the state $X'=\alpha'(\alpha^{-1}(X))$, also does not intersect $C^*$.
This means that ${\sf cost}({\cal S'},{\cal A}) \leq  {\sf cost}({\cal S},{\cal A})$.

Notice now that by the construction of ${\cal S}'$, $C$ is not in ${\sf front}_{\cal A}({\cal S}')$.
In the case where  ${\sf cov}_{\cal S}(C)\geq 1$ we have that
${\sf cost}({\cal S'},{\cal A}) < {\sf cost}({\cal S},{\cal A})$.

Notice that the case where 
${\sf cov}_{\cal S}(C)=0$ happens only when ${\sf action}={\sf anex}$ and 
there is an edge with one endpoint in $C$ and one in some country $X^*$ of ${\cal S}$ that 
does not intersect $C$. 
Moreover  ${\sf cc}_{G}(X\setminus C,\alpha^{-1}(X))=X$, for every state $X$ of ${\cal S}$.
This implies that ${\sf indep}(\Scal')\subseteq {\sf indep}(\Scal)$.
As $C\subseteq {\sf indep}(\Scal)$ and $C\cap {\sf indep}(\Scal')=\emptyset$, we conclude that 
$|{\sf indep}(\Scal')|<|{\sf indep}(\Scal)|$ as required.
\end{myproof}

To continue with the proof of \autoref{lemma_4} we explain how to transform the  ${\cal A}$-normal  $\Lambda$-state configuration 
${\cal S}$ of $G$ to a complete one. This is done in two phases. First, as long as there is an ${\cal A}$-cloud  $C\in{\sf front}_{\cal}({\cal S})$
where ${\sf cov}_{\cal S}({C})\geq 1$, we apply one of the above three procedures depending on the 
number of freeways intersected by $C$.  We again use ${\cal S}$ to denote the   ${\cal A}$-normal  $\Lambda$-state configuration of $G$ that is created in the end of this first phase. 
Notice that, as there is no ${\cal A}$-cloud with ${\sf cov}_{\cal S}({C})\geq 1$, then ${\sf cost}_{\cal A}({\cal S})=0$. 
The second phase  is the application of ${\sf anex}({\cal S},C)$,
as long as some $C\in {\sf front}_{\cal A}({\cal S})$ is touching some of the countries of ${\cal S}$.
We claim that this procedure will be applied as long as  there are vertices in ${\sf indep}(\Scal)$.
Indeed, if this is the case, the set ${\sf front}_{\cal A}({\cal S})$ is non-empty and 
by the connectivity of $G$, there is always a $C\in{\sf front}_{\cal A}({\cal S})$
that is touching some country of ${\cal S}$. Therefore, as ${\sf cost}_{\cal A}({\cal S})=0$ (by {Claim}~\ref{d0oi3d}), procedure
${\sf anex}({\cal S},C)$ will be applied again. 

By {Claim}~\ref{d0oi3d}, $|{\sf indep}(\Scal)|$ is strictly decreasing during the second phase. We again use ${\cal S}$ for the final outcome of this second phase. We  have that ${\sf indep}(\Scal)=\emptyset$
and we conclude that ${\cal S}$ is a complete ${\cal A}$-normal  $\Lambda$-state configuration of $G$ such that $|{\sf front}_{\cal A}({\cal S})| = 0$.

We  are now going to create a graph isomorphic to $\Lambda$ only by doing contractions in $G$.
For this we use $\Scal$, a complete ${\cal A}$-normal  $\Lambda$-state configuration of $G$ such that $|{\sf front}_{\cal A}({\cal S})| = 0$, obtained  as describe before.
We contract in $G$ every country of $\Scal$ into a unique vertex.
This can be done because the countries of $\Scal$ are connected.
Let $G'$ be the resulting graph.
By construction of $\Scal$, $G'$ is a contraction of $H$.
Because of Condition $4$ 
of $\Lambda$-state configuration, every freeway of $\Scal$ becomes an edge in $G'$.
This implies that there is a graph isomorphic to $\Lambda$ that is a subgraph of $G'$.
So  $\hat{\Gamma}_{k'}$ is isomorphic to a subgraph of  $G'$ with the same number of vertices.
Let see $\hat{\Gamma}_{k'}$ as a subgraph of $G'$ and let $e$ be an edge of $G'$ that is not an edge of  $\hat{\Gamma}_{k'}$.
As $e$ is an edge of $G'$, this implies that in $G$,  there is two states of  $\Scal$ such that there is no freeway between them but still an edge.
This is not possible by construction of $\Scal$.
We deduce that $G'$ is isomorphic to $\hat{\Gamma}_{k'}$.
Moreover, as $|{\sf front}_{\cal A}({\cal S})| = 0$, then every cloud is a subset of a country.
This implies that $G'$ is also a contraction of $H$.
By contracting in $G'$ the edge corresponding to 
{$\{a,({k'-1,k'-1})\}$} 
in $\hat{\Gamma}_{k'}$, we obtain that $\Gamma_{k'}$ is a contraction of $H$.
\autoref{lemma_4} follows.
\end{proof}

\begin{proof}[Proof of \autoref{j9io3nj78}]
  Let $\lambda$, $c$, $c_1$, and $c_2$ be integers. 
  It is enough to prove that there exists an integer $\lambda' = \Ocal(\lambda\cdot c_1\cdot ( c_2)^c)$ such that for every graph class $\Gcal \in \sqgc(c)$, 
\begin{eqnarray*}
\forall G \in \Gcal\  & \tw(G)  \leq  \lambda\cdot  ({\bf bcg}(G))^c  & \Rightarrow\\ 
\forall F \in \Gcal^{(c_1,c_2)}\  &  \tw(F)\leq \lambda'\cdot  ({\bf bcg}(F))^c. & 
\end{eqnarray*}

Let $\Gcal \in \sqgc(c)$ be a class of graph such that $\forall G \in \Gcal\ \ \tw(G)\leq \lambda\cdot  ({\bf bcg}(G))^c$.
Let $H \in \Gcal^{(c_1,c_2)}$ and let $G$ and $J$ be two graphs such that $G \in \Gcal$, $G \leq^{(c_1)} J \mbox{, and } H \leq^{c_2} J$. $G$ and $J$ exist by definition of $\Gcal^{(c_1,c_2)}$.

\begin{itemize}
\item By definition of $H$ and $J$, $\tw(H) \leq \tw(J)$.
\item By \autoref{lemma:twcont},  $\tw(J) \leq (c_1+1)(\tw(G)+1) -1$.
\item By definition of $\Gcal$,  $\tw(G) \leq \lambda\cdot \bcg(G)^c$.
\item By \autoref{lemma_4},  $\bcg(G)\leq (2 c_2+1) (\bcg(H)+2)+1$.
\end{itemize}
If we combine these four statements, we obtain that 
$$\tw(H) \leq (c_1+1)(\lambda\cdot [(2 c_2+1) (\bcg(H)+2)+1]^c +1) -1.$$
\noindent As the formula is independent of the graph class, the \autoref{j9io3nj78} follows.\end{proof}

\section{Conclusions, extensions, and open problems}

\label{copsed}

The main combinatorial result of this paper is that, for every $d$ and every apex-minor-free  graph class ${\cal G}$, the intersection class
${\sf inter}_{d}({\cal G})$ 
 has the SQG{\bf C} property for $c=1$.
%
Certainly, the main general question is to detect even wider graph classes with the SQG{\bf M}/SQG{\bf C} property.
In this direction, some insisting open issues are the following:

\begin{itemize}
\item Is the bound on the (multi-)degree necessary? Are there classes of intersection graphs
with unbounded or ``almost bounded''  maximum degree that have the SQG{\bf M}/SQG{\bf C} property?
\item All so far known results classify graph classes in \sqgm(1) or \sqgc(1). Are there (interesting) graph classes in $\sqgm(c)$ or $\sqgc(c)$ for some $1<c<2$ that do not belong in $\sqgm(1)$ or $\sqgc(1)$ respectively? An easy (but trivial) example of such a class 
is the class ${\cal Q}_{d}$ of the q-dimensional grids, i.e., the cartesian products of $q\geq 2$ equal length paths. 
It is easy to see that the maximum $k$ for which an $n$-vertex graph $G\in{\cal Q}_{q}$ contains a $(k\times k)$-grid as a minor is $k=\Theta(n^{\frac{1}{2}})$. On the other size, it can also be proven that    $\tw(G)=\Theta(n^{\frac{q-1}{q}})$. These two facts together 
imply that ${\cal Q}_{q}\in\sqgm(2-\frac{2}{q})$ while ${\cal Q}_{q}\not\in\sqgm(2-\frac{2}{q}-\epsilon)$ for every $\epsilon>0$.
\item Usually the graph classes in $\sqgc(1)$ are characterised by some ``flatness'' property. For instance, see the results in~\cite{GiTh2013,KaThWo2017,KaThWo2017} for $H$-minor free graphs, where $H$ is an apex graph. Can 
 $\sqgc(1)$ be useful as an intuitive definition of the ``flatness'' concept? Does this have some geometric interpretation?
\end{itemize}

{
}

\end{document}